\def\R{{\mathbb{R}}}
\def\RR{\R}
\newcommand{\ud}{\,\mathrm{d}}
\newcommand{\MCL}{\mc{L}}
\newcommand{\MCP}{\mc{P}}
\newcommand{\mc}[1]{\mathcal{#1}}
\newcommand{\EE}{\mathbb{E}}
\newcommand{\eps}{\epsilon}
\newtheorem{Theorem}{Theorem}[section]
\newtheorem{Definition}[Theorem]{Definition}
\newtheorem{Assumption}[Theorem]{Assumption}
\newtheorem{Lemma}[Theorem]{Lemma}
\newtheorem{Remark}[Theorem]{Remark}
\def\d{\mathrm{d}}
\newcommand{\cF}{\mathcal{F}}
\newcommand{\cL}{\mathcal{L}}
\newcommand{\cP}{\mathcal{P}}
\newcommand{\cS}{\mathcal{S}}
\newcommand{\cW}{\mathcal{W}}
\newcommand{\cV}{\mathcal{V}}
\newcommand{\cm}{\mathsf{m}}
\newcommand{\QQ}{\mathbb{Q}}
\newcommand{\PP}{\mathbb{P}}
\newcommand{\ZZ}{\mathcal{Z}}
\newcommand{\half}{\frac{1}{2}}
\newcommand{\borig}{B}
\newcommand{\sigmaorig}{\Sigma}
\newcommand{\horig}{H}
\newcommand{\gorig}{G}
\title{Deep Signature Approach for McKean-Vlasov FBSDEs in a Random Environment}
\author{Ruimeng Hu\thanks{Department of Mathematics, and Department of Statistics and Applied Probability, University of California, Santa Barbara, CA, USA, (\href{mailto:rhu@ucsb.edu}{rhu@ucsb.edu})} \and Botao Jin\thanks{Department of Statistics and Applied Probability, University of California, Santa Barbara, CA, USA, (\href{mailto:b_jin@ucsb.edu }{b\_jin@ucsb.edu})} \and  Mathieu Lauri\`ere\thanks{Shanghai Center for Data Science; NYU-ECNU Institute of Mathematical Sciences at NYU Shanghai; NYU Shanghai,  Shanghai, People’s Republic of China, (\href{mailto:mathieu.lauriere@nyu.edu}{mathieu.lauriere@nyu.edu})} \and Jiacheng Zhang\thanks{Department of Statistics, The Chinese University of Hong Kong, New Territories, Hong Kong, 
 (\href{mailto:jiachengzhang@cuhk.edu.hk}{jiachengzhang@cuhk.edu.hk})} }
 \date{}
\begin{document}

\maketitle

\bigskip
\begin{abstract}
   Mean-field games with common noise provide a powerful framework for modeling the collective behavior of large populations subject to shared randomness, such as systemic risk in finance or environmental shocks in economics. These problems can be reformulated as McKean–Vlasov forward–backward stochastic differential equations (MV-FBSDEs) in a random environment, where the coefficients depend on the conditional law of the state given the common noise. Existing numerical methods, however, are largely limited to cases where interactions depend only on expectations or low-order moments, and therefore cannot address the general setting of full distributional dependence.
   
  In this work, we introduce a deep learning–based algorithm for solving MV-FBSDEs with common noise and general mean-field interactions. Building on fictitious play, our method iteratively solves conditional FBSDEs with fixed distributions, where the conditional law is efficiently represented using signatures, and then updates the distribution through supervised learning. Deep neural networks are employed both to solve the conditional FBSDEs and to approximate the distribution-dependent coefficients, enabling scalability to high-dimensional problems. Under suitable assumptions, we establish convergence in terms of the fictitious play iterations, with error controlled by the supervised learning step. Numerical experiments, including a distribution-dependent mean-field game with common noise, demonstrate the effectiveness of the proposed approach.
\end{abstract}
\noindent\textbf{Keywords:} Mean field game, common noise, McKean-Vlasov FBSDE, signature.

\section{Introduction}

Mean-field game (MFG) theory, introduced by \cite{lasry2007mean} and \cite{huang2006large}, has emerged as a powerful framework to analyze strategic interactions among a large number of rational agents. Each individual has negligible influence, yet the collective distribution of states drives the system’s evolution. The framework has been applied in various fields, including economics, finance, engineering, and the social sciences. A particularly relevant feature in many of these domains is the presence of \emph{common noise}, representing systemic shocks or shared uncertainties such as growth theory~\cite{lasry2008application,gueant2010mean}, systemic risk in financial networks \cite{carmona2015mean}, production of exhaustible resource~\cite{graber2016linear}, crowd motion in uncertain environments~\cite{achdou2018mean}, trading on financial markets~\cite{firoozi2018mean,fu2021mean,bassou2024mean}, bank runs~\cite{carmona2017mean,burzoni2023mean}, electricity and energy transition~\cite{alasseur2020extended,escribe2024mean,dumitrescu2024energy}, climate variability \cite{lavigne2023decarbonization}, price formation~\cite{gomes2023machine}, and macroeconomic fluctuations \cite{vu2025heterogenous}.

The study of MFGs without common noise have been well developed. Solutions can be characterized by the HJB–FP system, coupling a Hamilton–Jacobi–Bellman (HJB) equation for the representative agent and a Fokker–Planck (FP) equation for the distribution dynamics \cite{lasry2007mean}. Alternatively, the problem admits a probabilistic formulation via McKean–Vlasov forward–backward stochastic differential equations (MV-FBSDEs), which encode the optimality and consistency conditions in a pathwise manner \cite{carmona2013probabilistic}. At a more global level, the master equation, a PDE on the space of probability measures, captures the full system, with well-posedness established under certain conditions \cite{cardaliaguet2019master}.

Including common noise in MFGs introduces substantial challenges. The distribution of states is no longer deterministic but evolves as a stochastic process, requiring analysis conditional on the common noise. This leads naturally to stochastic HJB-FP equations or MV-FBSDE in a random environment \cite{CarmonaDelarue_book_II}, both of which are considerably more intricate than their deterministic counterparts. The existing literature has addressed certain classes of such problems, primarily from a theoretical perspective \cite{carmona2016mean,lacker2016general,cardaliaguet2019master, cardaliaguet2022first, lacker2023closed,belak2021continuous,kolokoltsov2019mean,bertucci2023monotone,bensoussan2021linear,huang2022mean,hua2024linear,dianetti2025strong} and modeling applications (see references in the first paragraph). 
On the algorithmic side, contributions remain limited: using a finite difference scheme for a PDE system, \cite{achdou2018mean} studied a crowd motion model with a common noise that can take a finite number of values at a finite number of times; similar models were solved in~\cite{carmona2022convergence} and~\cite{perrin2020fictitious,wu2025population} using respectively deep learning and reinforcement learning; other works that used deep learning for MFG models with common noise are \cite{min2021signatured}, which studied interaction through conditional moments, and \cite{gomes2023machine}, which considered interactions through a one-dimensional stochastic price, and \cite{gu2024global}, which proposed approaches to solve master equations for some MFGs with common noise arising in macro-economics. Yet in many applications, the coefficients depend on the full distribution in nonlinear ways, and developing a rigorous and computationally tractable framework for such settings is quite needed.

This paper aims to fill this gap. We adopt the probabilistic characterization and work with McKean–Vlasov FBSDEs in a random environment:
\begin{equation}
\begin{dcases}
  \ud X_t = \borig(t, \Theta_t, Z_t^0, \MCL(\Theta_t \vert\mc{F}_t^0)) \ud t + \sigmaorig(t, \Theta_t, \MCL(\Theta_t\vert \mc{F}_t^0)) \ud W_t  + \sigmaorig^0(t, \Theta_t, \MCL(\Theta_t \vert \mc{F}_t^0)) \ud W_t^0,   
  \\
  \ud Y_t = -\horig(t,\Theta_t, Z_t^0, \MCL(\Theta_t \vert \mc{F}_t^0)) \ud t + Z_t \ud W_t + Z_t^0 \ud W_t^0, 
  \\ 
  X_0 \sim \mu_0, \qquad Y_T = \gorig(X_T, \MCL(X_T \vert \mc{F}_T^0)), \qquad 
  \Theta_t = (X_t, Y_t, Z_t), \quad t \in [0,T].
\end{dcases}
\end{equation}
Here $\MCL(\cdot \vert \mc{F}^0)$ denotes the marginal law conditional on the common noise filtration, and $(\borig, \sigmaorig,\sigmaorig^0, \horig)$ are functions of compatible dimensions. Importantly, the coefficients depend on the distribution through an embedding \(m\), which provides a compact and flexible way to represent distributional features. This allows us to handle general and nonlinear distributional dependence beyond simple moment-based interactions, while keeping the problem tractable for both analysis and computation. The precise forms of these coefficients are specified in Section~\ref{sec:prelim}. 

Our \textbf{main contribution} lies in developing the \emph{first} framework that integrates signature \cite{lyons2002system,LyonsTerryJ2007DEDb,boedihardjo2014signature,NEURIPS2019_deepsig} with deep-learning based BSDE solvers \cite{han2017deep,han2018solving, hure2020deep,carmona2022convergence,germain2022approximation} within an iterative for MFGs with common noise and general distribution dependence. Our approach shares similarities with the fictitious play algorithm introduced for classical games in~\cite{brown1949some,brown1951iterative} in the sense that players adapt their strategy to the ones of other players; this algorithm has been adapted to MFGs in~\cite{cardaliaguet2017learning} and implemented using deep neural networks and reinforcement learning  in~\cite{lauriere2022scalable,angiuli2025deep,magnino2025solving}. In the present work, instead of averaging over past iterations, we consider that the players react to the last iterate, but we use the terminology of fictitious play consistently with e.g.~\cite{hu2019deep,han2022convergence}. 

Specifically, our approach iteratively solves the above system using Deep BSDE methods \cite{han2018solving}, where the conditional distribution is efficiently encoded by its signature representation, and then updates this representation through supervised learning. This combination enables efficient handling of high-dimensional problems with general nonlinear distribution dependence, going well beyond the conditional-moment structures considered in prior work \cite{min2021signatured}.
We further prove the convergence of this fictitious play scheme: under suitable conditions, the algorithm converges up to a small supervised learning error. Finally, we conduct numerical experiment on the supervised learning step and on a McKean–Vlasov FBSDE in random environment with an explicit solution, as well as construct and test on a flocking model with common noise, 
which provides a nonlinear distribution-dependent benchmark to evaluate performance and demonstrate the accuracy of our approach.

The rest of the paper is organized as follows. Section~\ref{sec:prelim} introduces the probabilistic representation of MFGs with common noise and general distribution dependence, formulated as a McKean-Vlasov FBSDE system. Section~\ref{sec:algo} presents our numerical algorithm, and its convergence is established in Section~\ref{sec:analysis}. Section~\ref{sec:numerics} illustrates the approach through three numerical experiments: a supervising learning example for the measure embedding $m$, an MV-FBSDE with an analytic benchmark solution that does not arise from the linear–quadratic problems, and a mean-field flocking model with common noise. All three involve nonlinear distribution dependence beyond conditional moments.

\section{Preliminaries}\label{sec:prelim}

{\bf General notation. } 
For a dimension $n$, we denote by $\MCP^2(\RR^n)$ the set of probability measures on $\RR^n$ with a second moment. To quantify the distance between probability distributions in $\MCP^2(\RR^n)$, we use the 2-Wasserstein distance $\cW_2$, defined as: for every $\mu,\nu \in \MCP^2(\RR^n)$, 
\begin{equation}\label{def:W2}
    \mathcal{W}_2^2(\mu, \nu) := \inf_{\pi \in \Pi(\mu, \nu)} \int_{\RR^n \times \mathbb{R}^n} |x - y|^2  \pi(dx, dy),
\end{equation}
where $\Pi(\mu, \nu)$ denotes the set of all couplings $\pi$ of $\mu$ and $\nu$,  and $\pi(\d x,\R^n)=\mu(\d x)$ and $\pi(\R^n,\d y)=\nu(\d y)$.
For a random variable $X$ and a sigma-field $\mathcal{G}$, we denote by $\MCL(X)$ the law of $X$ and by $\MCL(X \vert \mathcal{G})$ the conditional law of $X$ given $\mathcal{G}$. 
Let $T$ be a time horizon. Let $d$ be a positive integer for the dimension of the forward and backward variable. Let $q$ be a positive integer for the dimension for the idiosyncratic and common noises (for simplicity, we assume they have the same dimension). We will use $\ell$ to denote the dimension of the measure embedding (see Assumption~\ref{assumption:measure} for details). 
Let $W = (W_t)_{t \ge 0}$ and $W^0 = (W^0_t)_{t \ge 0}$ be two $q$-dimensional Brownian motions, and let $\mc{F} = (\mc{F}_t)_{t \ge 0}$ and $\mc{F}^0= (\mc{F}_t^0)_{t \ge 0}$ be their respective natural filtrations.

\bigskip

\subsection{Definition of the problem}

We consider an FBSDE system whose solution is denoted by $(X,Y,Z,Z^0) = (X_t,Y_t,Z_t,Z^0_t)_{t \in [0,T]}$, where for every $t$ in $[0,T]$, $(X_t,Y_t,Z_t,Z^0) \in \RR^d \times \RR^d \times \RR^{d \times q} \times \RR^{d \times q}$. For simplicity, denote $\Theta_t = (X_t, Y_t, Z_t) \in \mathbb{R}^\theta$, with $\RR^\theta = \RR^d \times \RR^d \times \RR^{d \times q}$. Let $\borig: [0,T] \times \RR^\theta \times \RR^{d \times q}\times \MCP^2(\RR^\theta) \to \RR^d,$ $\sigmaorig: [0,T] \times \RR^\theta \times \MCP^2(\RR^\theta) \to \RR^{d \times q},$ $\sigmaorig^0: [0,T] \times \RR^\theta \times \MCP^2(\RR^\theta) \to \RR^{d \times q}$ denote respectively the drift, the idiosyncratic noise volatility and the common noise volatility in the forward dynamics. Let $\horig: [0,T] \times \RR^\theta\times \RR^{d \times q} \times \MCP^2(\RR^\theta) \to \RR^d$ and $\gorig: \RR^d \times \MCP^2(\RR^d) \to \RR^d$ denote respectively the driver and the terminal condition of the backward dynamics. 
We consider the following FBSDE:
\begin{equation}\label{def:MV-FBSDEorig}
\begin{dcases}
  \ud X_t = \borig(t, \Theta_t, Z_t^0, \MCL(\Theta_t \vert\mc{F}_t^0)) \ud t + \sigmaorig(t, \Theta_t, \MCL(\Theta_t\vert \mc{F}_t^0)) \ud W_t  + \sigmaorig^0(t, \Theta_t, \MCL(\Theta_t \vert \mc{F}_t^0)) \ud W_t^0,   
  \\
  \ud Y_t = -\horig(t,\Theta_t, Z_t^0, \MCL(\Theta_t \vert \mc{F}_t^0)) \ud t + Z_t \ud W_t + Z_t^0 \ud W_t^0, 
  \\ 
  X_0 \sim \mu_0, \qquad Y_T = \gorig(X_T, \MCL(X_T \vert \mc{F}_T^0)), \qquad 
  \Theta_t = (X_t, Y_t, Z_t), \quad t \in [0,T].
\end{dcases}
\end{equation}

\begin{Remark}
    Our study is motivated by FBSDEs stemming from Pontryagin maximum principle, which explains why we consider that $Y$ and $X$ have the same dimension $d$. However, our approach can be adapted directly to situations where they have different dimensions.
\end{Remark}

\begin{Remark}[Existence of solutions and relation to MFG]
Assume that $\sigmaorig, \sigmaorig^0$ do not depend on $(Y_t,Z_t)$, and $\sigmaorig, \sigmaorig^0, \borig, \horig$ depend only on $\cL(X_t\vert\cF_t^0)$ instead of $ \cL(\Theta_t\vert\cF_t^0)$. Then the system~\eqref{def:MV-FBSDEorig} can be interpreted as the MV-FBSDE associated to a master field $\mathcal{U}$ through the relation $Y_t = \mathcal{U}(t, X_t, \cL(X_t \vert \cF_t^0))$, as explained in~\cite[Chapter 5]{CarmonaDelarue_book_II}. Under suitable conditions, Theorem 5.4 gives existence of a solution to the MV-FBSDE in short time.
In the context of MFGs, the master field $\mathcal{U}$ is the solution to the master equation, and the MV-FBSDE corresponds to the equilibrium conditions of an MFG, where $X$ denotes the state of a representative player and $Y$ denotes the value function of this player (in this case, $Y$ is one-dimensional). Alternatively, $Y$ could represent the derivative of the value function, as explained in~\cite[Section 5.4]{CarmonaDelarue_book_II} based on a form of the maximum principle.

\end{Remark}

In the sequel, we suppose that the functions $(\borig, \sigmaorig, \sigmaorig^0, \horig, \gorig)$ depend only on an $\ell$-dimensional embedding of the distribution. More precisely, we assume that they have the following structure. 

\begin{Assumption}[Measure dependence]\label{assumption:measure}
We assume that there exists 
\begin{itemize}
	\item functions $b: [0,T] \times \RR^\theta \times \RR^\ell \to \RR^d,$ $\sigma: [0,T] \times \RR^\theta \times \RR^\ell \to \RR^{d \times q},$ $\sigma^0: [0,T] \times \RR^\theta \times \RR^\ell \to \RR^{d \times q}$, $h: [0,T] \times \RR^\theta \times \RR^\ell \to \RR^d$ and $g: \RR^d \times \RR^\ell \to \RR^d$, 
	\item and functions $m_i: [0,T] \times \mathbb{R}^\theta \times \MCP^2(\mathbb{R}^\theta) \to \mathbb{R}^\ell$,  $i=1,\dots,4$ and $m_5: \mathbb{R}^d \times \MCP^2(\mathbb{R}^d) \to \mathbb{R}^\ell$. 
\end{itemize}
such that, for every $(x,y,z,z^0) \in \RR^\theta \times \RR^{d \times q}$, every $\nu \in \MCP^2(\RR^\theta)$ and every $\mu \in \MCP^2(\RR^d)$,  
\begin{equation}
    \begin{aligned}
       \borig(t, x, y, z, z^0, \nu) 
       &= b(t, x, y, z, z^0, m_1(t, x, y, z, \nu)), 
       \\
       \sigmaorig(t, x, y, z, \nu) 
       &= \sigma(t, x, y, z, m_2(t, x, y, z, \nu)),
       \\
       \sigmaorig^0(t, x, y, z, \nu) 
       &= \sigma^0(t, x, y, z, m_3(t, x, y, z, \nu)),   
       \\
     \horig(t, x, y, z, z^0, \nu) 
     &= h(t, x, y, z, z^0, m_4(t, x, y, z, \nu)),
     \\
     \gorig(x, \mu) 
     &= g(x, m_5(x, \mu)). 
    \end{aligned}
\end{equation}
\end{Assumption}

Based on the above assumption on the form of the coefficients, we can rewrite the FBSDE as follows:
\begin{equation}\label{def:MV-FBSDE-with-m}
\begin{dcases}
  \ud X_t = b(t, \Theta_t, Z_t^0, m_1(t, \Theta_t, \MCL(\Theta_t\vert \mc{F}_t^0))) \ud t + \sigma(t, \Theta_t, m_2(t, \Theta_t, \MCL(\Theta_t\vert \mc{F}_t^0))) \ud W_t \\
  \qquad \qquad + \sigma^0(t, \Theta_t, m_3(t, \Theta_t, \MCL(\Theta_t\vert \mc{F}_t^0))) \ud W_t^0, 
  \\
  \ud Y_t = -h(t,\Theta_t, Z_t^0, m_4(t, \Theta_t, \MCL(\Theta_t\vert \mc{F}_t^0))) \ud t + Z_t \ud W_t + Z_t^0 \ud W_t^0,
  \\ 
  X_0 \sim \mu_0, \qquad Y_T = g(X_T, m_5(X_T, \MCL(X_T\vert \mc{F}_T^0))), \qquad 
  \Theta_t = (X_t, Y_t, Z_t), \quad t \in [0,T].
\end{dcases}
\end{equation}

\subsection{Signatures of paths}\label{subsec:signature}

As seen above, we consider functions $m_i$ of probability distributions. However, for the sake of numerical implementation, we will need to replace laws of stochastic processes by their finite-dimensional approximations. To this end, we will employ path signature \cite{lyons2002system,LyonsTerryJ2007DEDb} and log-signature \cite{liao2019learning}, a powerful tool from rough path theory. 

Denote $\cV^p([0,T],\R^d)$ the space
of continuous mappings from $[0,T]$ to $\R^d$ with finite $p$-variation and equip the space $\cV^p([0,T],\R^d)$, equipped with the norm
$$
\|\cdot\|_{\cV^p} \coloneqq \|\cdot\|_\infty+\|\cdot\|_p,$$
where $\|\cdot\|_\infty$ is the supreme norm and $\|\cdot\|_p$ is defined by
$$
\|X\|_p\coloneqq\bigg(\sup_{D\subset[0,T]}\sum_{i=0}^{r-1}\big\|X_{t_{i+1}}-X_{t_i}\big\|^p\bigg)^{1/p},
$$
with the supremum taken over all partitions $D\subset[0,T]$, $D = \{0\leq t_0 <t_1 <\cdots<t_r\leq T\}$.
\begin{Definition}[Signature]
    Let $X\in\cV^p([0,T],\R^d)$ such that the iterated integrals below are well defined. The signature of $X$, denoted by $\mc{S}(X)$, is an element of $\oplus_{k=0}^\infty(\R^d)^{\otimes k}$ defined by
    $\mathcal{S}(X) = (1,X^1,\cdots,X^k,\cdots),$ 
    with
    \begin{align}\label{def:sig}x^k\coloneqq \int_{0<t_1<\cdots t_k<T}\d x_{t_1} \otimes\cdots\otimes \d x_{t_k}.\end{align}
   For $M \geq 1$, the truncated signature of depth $M$ is  $\mathcal{S}^M(x) := (1,x^1,\cdots,x^M)$, which has dimension $\frac{d^{M+1}-1}{d-1}$.
\end{Definition}

\begin{Definition} [Log-Signature] \label{def: log-sig} Let $x\in\cV^p([0,T],\R^d)$ be a path with its signature $\mathcal{S}(X) = (1, X^1, \cdots, X^k, \cdots)$ well-defined. The log-signature of $x$, denoted by $\log \mathcal{S}(X)$, is defined as the following element in $\oplus_{k=1}^\infty (\R^d)^{\otimes k}$:
\begin{align} \label{log-sig}
    \log \mathcal{S}(X) := \sum_{n=1}^{\infty} \frac{(-1)^{n+1}}{n}(\mathcal{S}(X)-I)^{\otimes n},
\end{align}
where $I = (1, 0, 0, \cdots)$
is the multiplicative identity in the tensor algebra.
\end{Definition}

In our setting, $X$ will be a semimartingale, so the iterated integrals in \eqref{def:sig} are well defined in both the Stratonovich and Itô senses. Several properties (see \cite[Appendix A]{min2021signatured}) make the signature particularly suitable for our framework:
\begin{itemize}
    \item To guarantee a unique representation via signatures, which are otherwise only unique up to tree-like equivalence, we augment the path with time, i.e., we consider $\hat{X}_t = (t,X_t)$. The signature $\mc{S}(\hat X)$ of this augmented path then uniquely characterizes $X$ (see \cite{boedihardjo2014signature} for detailed discussion).
    \item Due to the factorial decay of its terms, the signature can accurately represent a path using only a low-order truncation (a small value of $M$).
    \item The signature serves as a feature map for sequential data, a role justified by its universality property, which is formally stated in the literature \cite{NEURIPS2019_deepsig} and the reference therein.
\end{itemize}
With these in mind, in our algorithm, we would like to approximate the functions $m_i$ using signature-based representations. Specifically, we seek functions $\cm_i$, $i=1,\dots,$ such that: $m_i\big(t,\Theta_t,\cL(\Theta_t|\cF_t^0)\big) \approx \cm_i\big(t,\Theta_t,\mc{S}^M(\hat W_{[0,t]}^0)\big)$ for $i=1,..,4$, and  $m_5\big(X_T,\cL(X_T|\cF_T^0)\big) \approx \cm_5\big(X_T,\mc{S}^M(\hat W_{[0,T]}^0)\big)$, with the option of replacing $\mathcal{S}^M$ by its log version $\log \mathcal{S}^M$.

\section{A Deep Learning Algorithm}\label{sec:algo}

The main idea of our algorithm integrates path signatures with deep learning-based BSDE solvers within a fictitious play framework to solve the system of Mean-Field FBSDEs given in \eqref{def:MV-FBSDE-with-m}. 
The algorithm iteratively solves system~\eqref{def:MV-FBSDE-with-m}. Each iteration $k$ consists of three main steps. Based on the outputs from iteration $k-1$, the $k^{th}$ iteration proceed as follows. 
\begin{itemize}
    \item[\bf Step 1.] Simulate both the forward process $X_t$ and the backward components $(Y_t, Z_t, Z_t^0)$ in discrete time from $0$ to $T$. More precisely, if one works with $N_2$ common Brownian paths $\{W^{0,n_2}\}_{n_2=1}^{N_2}$ and  $N_1$ idiosyncratic Brownian paths $\{W^{n_1,n_2}\}_{n_1=1}^{N_1}$, then for each $W^{0,n_2}$, one needs to simulate $N_1$ paths $\{X^{n_1,n_2}\}$ and the backward  terms $(Y^{n_1,n_2}, Z^{n_1,n_2}, Z^{0,n_1,n_2})$. The continuous analogue of the dynamic of $X^{n_1,n_2}$ and $(Y^{n_1,n_2})$ is given by 
    \begin{equation}\label{def:MV-FBSDE-alg}
\begin{dcases}
  \ud X^{n_1,n_2}_t = b\Big(t, \Theta^{n_1,n_2}_t, Z_t^{0,n_1,n_2}, \cm_1\big(t,\Theta^{n_1,n_2}_t,\mc{S}^M(\hat W_{[0,t]}^{0,n_2})\big)\Big) \ud t \\
  \qquad \qquad + \sigma\Big(t, \Theta^{n_1,n_2}_t, \cm_2\big(t,\Theta^{n_1,n_2}_t,\mc{S}^M(\hat W_{[0,t]}^{0,n_2})\big)\Big) \ud W^{n_1,n_2}_t \\
  \qquad \qquad + \sigma^0\Big(t, \Theta^{n_1,n_2}_t, \cm_3\big(t,\Theta^{n_1,n_2}_t,\mc{S}^M(\hat W_{[0,t]}^{0,n_2})\big)\Big) \ud W_t^{0,n_2}, 
  \\
  \ud Y_t = -h\Big(t, \Theta^{n_1,n_2}_t, Z_t^{0,n_1,n_2}, \cm_4\big(t,\Theta^{n_1,n_2}_t,\mc{S}^M(\hat W_{[0,t]}^{0,n_2}))\big)\Big) \ud t + Z^{n_1,n_2}_t \ud W^{n_1,n_2}_t + Z_t^{0,n_1,n_2} \ud W_t^{0,n_2},
  \\ 
  Y^{n_1,n_2}_0=u\big(X_0^{n_1,n_2}\big),\;Z_t^{n_1,n_2}=v\Big(t,X_t^{n_1,n_2},\mc{S}^M(\hat W_{[0,t]}^{0,n_2})\Big),\;Z_t^{0,n_1,n_2}=v^0\Big(t,X_t^{n_1,n_2},\mc{S}^M(\hat W_{[0,t]}^{0,n_2})\Big),
\end{dcases}
\end{equation}
    which is slightly different from \eqref{def:MV-FBSDE-with-m} and we compute $\mu_T^{n_2}$ and $\nu_t^{n_2}$ as the  empirical versions of the conditional laws of $X_t$ and $\Theta_t = (X_t, Y_t, Z_t)$, conditioned on simulated common noise paths $W^{0,n_2}$.
    \begin{Remark}
        Specifically, we are utilizing  $\cm_i$ to approximate $m_i$ and the random decoupling field $u(X_0)$, $v(X_t)$ and $v^0(X_t)$ to approximate $Y_t,Z_t$ and $Z_t^0$. We have explained our rationale for using $\cm_i$ to approximate $m_i$ in Section \ref{subsec:signature}, primarily due to the universality proposition of the signature. The idea of using decoupling field to represent the backward components $Y_t$, $Z_t$ and $Z_t^0$ stems from the connection between the BSDEs and associated PDEs provided that the Hamiltonian has nice regularity and has been widely used in the literature \cite{han2022convergence}. In this context, we are employing a random decoupling field that incorporates a component of the signature term to account for the randomness arising from the common noise. This results in the form of  $v(t, X_t,\mc{S}^M(\hat W_{[0,t]}^0))$ and $v^0(t, X_t,\mc{S}^M(\hat W_{[0,t]}^0))$.   
    \end{Remark}
    \item[\bf Step 2.] Evaluate $m_i, i=1,\dots,5$ using these empirical measures and set them as training targets for supervised learning. The functions $\cm_i^k$ are then fitted to approximate $m_i$, with inputs consisting of time, the state variables, and the truncated signature of the simulated common noise paths.
    \item[\bf Step 3.] Replace $m_i$ in system~\eqref{def:MV-FBSDE-with-m} with the updated $\cm_i^k$, and solve the resulting system using the Deep BSDE method. This produces updated functions $u^k, v^k, v^{0,k}$, parameterized by neural networks, which approximate $Y_0, Z_t, Z_t^0$ at the $k^{th}$ iteration.
\end{itemize}

\subsection{The detailed algorithm}
\label{sec:detailed-algo}
We describe the algorithms for each step in detail in this subsection.
\begin{itemize}
    \item[\bf Step 0.]\textbf{Initialization:} At the beginning of the algorithm, we need these initialization:
\begin{itemize}
    \item $N_1$: The number of sample paths of the individual noise $W$ generated for each fixed common noise path $W^0$;
    \item $N_2$: The number of sample paths of the common noise $W^0$;
    \item A partition $\pi$ on the interval $[0,T]$ of size $N_T$: $0 = t_0 < t_1 < \cdots < t_{N_T} = T$;
    \item The level of signature $M$ with the corresponding dimension of the truncated signature $d_\text{sig}=\frac{(q+1)^{M+1}-1}{q}$;
    \item The initialization of the decoupling field $u^0:\R^d\to \R^d$ and $v^0,v^{0,0}:[0,T]\times\R^d\times\R^{d_\text{sig}}\to \R^{d\times q}$;
    \item The initial distribution dependence functions $\cm_i^0:[0,T]\times\R^\theta\times\R^{d_{\text{sig}}}\to\R^\ell$ for $i=1,..,4$ and $\cm_5^0:\R^d\times\R^{d_{\text{sig}}}\to\R^\ell$.
\end{itemize} 
We use superscripts $k = 1,2,\dots$ to index iterations. In each iteration stage, the
algorithm performs the following three steps:

\item[\bf Step 1.] {\bf Generate samples.} We generate $N_1\times N_2$ samples of initial condition $\tilde X_0^{k,n_1,n_2}$ and $N_1\times N_2$ samples of individual Brownian motions increments $\Delta W^{k,n_1,n_2}_{t_i}$ and $N_2$ samples of common Brownian motions increment $\Delta W^{0,k,n_2}_{t_i}$ for $n_1 = 1,\cdots,N_1,$ and $n_2 =1,\cdots,N_2$. Recall that  $\mc{S}^M(\hat W^{0,k,n_2}_{[0,t_i]})$ denote the truncated signature (up to level $M$) for the linear interpolation of the augmented path $\hat W^{0,k,n_2}$, over the time interval $[0,t_i]$, computed from the discrete increments $\{\Delta \hat W^{0,k,n_2}_{t_j}\}_{j=0,1,..,i}$. We simulate the forward SDEs in discrete time.  

\begin{equation*}
    \begin{aligned}
        \tilde Y_{0}^{k,n_1,n_2}&=u^{k-1}(\tilde X_{0}^{k,n_1,n_2}),\quad\tilde Z_{t_i}^{k,n_1,n_2}=v^{k-1}\Big(t_i,\tilde X_{t_{i}}^{k,n_1,n_2},\mc{S}^M(\hat W^{0,k,n_2}_{[0,t_i]})\Big),
        \\
        \tilde\Theta_{t_i}^{k,n_1,n_2}&=\big(X_{t_i}^{k,n_1,n_2},Y_{t_i}^{k,n_1,n_2},Z_{t_i}^{k,n_1,n_2}),\quad \tilde Z_{t_i}^{0,k,n_1,n_2}=v^{0,k-1}\Big(t_i,\tilde X_{t_{i}}^{k,n_1,n_2},\mc{S}^M(\hat W^{0,k,n_2}_{[0,t_i]})\Big), 
        \\
        \tilde X_{t_{i+1}}^{k,n_1,n_2}&=\tilde X_{t_{i}}^{k,n_1,n_2}+b\Big(t_i,\tilde\Theta_{t_i}^{k,n_1,n_2}, \tilde Z_{t_i}^{0,k,n_1,n_2}, \cm^{k-1}_1\big(t_i,\tilde\Theta_{t_i}^{k,n_1,n_2},\mc{S}^M(\hat W^{0,k,n_2}_{[0,t_i]})\big)\Big)\Delta t_i
        \\
        &\qquad+\sigma\Big(t_i,\tilde \Theta_{t_{i}}^{k,n_1,n_2},\cm^{k-1}_2\big(t_i,\tilde\Theta_{t_i}^{k,n_1,n_2},\mc{S}^M(\hat W^{0,k,n_2}_{[0,t_i]})\big)\Big)\Delta W^{k,n_1,n_2}_{t_i}
        \\
        &\qquad+\sigma^0\Big(t_i,\tilde  \Theta_{t_{i}}^{k,n_1,n_2},\cm^{k-1}_3\big(t_i,\tilde\Theta_{t_i}^{k,n_1,n_2},\mc{S}^M(\hat W^{0,k,n_2}_{[0,t_i]})\big)\Big)\Delta W^{0,k,n_2}_{t_i},
        \\
        \tilde Y_{t_{i+1}}^{k,n_1,n_2}&=\tilde Y_{t_{i}}^{k,n_1,n_2}-h\Big(t_i,\tilde \Theta_{t_{i}}^{k,n_1,n_2},\cm^{k-1}_4\big(t_i,\tilde\Theta_{t_i}^{k,n_1,n_2},\mc{S}^M(\hat W^{0,k,n_2}_{[0,t_i]})\big)\Big)\Delta t_i
        \\
        &\qquad +\tilde Z_{t_i}^{k,n_1,n_2}\Delta W^{k,n_1,n_2}_{t_i}+Z_{t_i}^{0,k,n_1,n_2}\Delta W^{0,k,n_2}_{t_i}.
    \end{aligned}
\end{equation*}
Define the empirical measures as
\begin{equation*}
    \nu_{t_i}^{k,n_2}:=\frac{1}{N_1}\sum_{n_1=1}^{N_1}\delta_{\tilde\Theta_{t_{i}}^{k,n_1,n_2}},\qquad\mu_{T}^{k,n_2}:=\frac{1}{N_1}\sum_{n_1=1}^{N_1}\delta_{\tilde{X}_{T}^{k,n_1,n_2}}.
\end{equation*}

\item[\bf Step 2.] {\bf Learn the distributions dependence.}
We use data obtained above to construct the supervised learning problem in the second step to approximate the distribution dependence. That is, we have
\begin{equation*}
    \begin{aligned}
    \cm^k_i&:=\text{arginf}_{\cm}\sum_{n_1=1}^{N_1}\sum_{n_2=1}^{N_2}\sum_{i=0}^{N_T-1}\Big\|m_i(t_i,\tilde\Theta_{t_i}^{k,n_1,n_2},\nu_{t_i}^{n_2,k})-\cm\big(t_i,\tilde\Theta_{t_i}^{k,n_1,n_2},\mc{S}^M(\hat W^{0,k,n_2}_{[0,t_i]})\big)\Big\|_2^2,\text{ for }i=1,2,3,4,
    \\
    \cm^k_5&:=\text{arginf}_{\cm}\sum_{n_1=1}^{N_1}\sum_{n_2=1}^{N_2}\Big\|m_5(\tilde X_{T}^{k,n_1,n_2},\mu_{T}^{n_2,k})-\cm\big(\tilde X_{t_{N_T}}^{k,n_1,n_2},\mc{S}^M(\hat W^{0,k,n_2}_{[0,t_{N_T}]})\big)\Big\|_2^2,
    \end{aligned}
\end{equation*}
where $\cm_i$, $i= 1,2,3,4,5$ are searched over a class of functions. One option is to search $\cm$ directly over the neural networks. Another is to restrict $\cm$ to the form
$$
\cm\big(\cdot, \mc{S}^M(\hat W^{0,k,n_2}_{[0,t_i]})\big)=\Big\langle\varphi(\cdot), \mc{S}^M(\hat W^{0,k,n_2}_{[0,t_i]})\big)\Big\rangle,
$$
where $\varphi(\cdot)\in\R^{d_\text{sig}}$is parameterized by a NN. A comparison of these two function classes is given in Section~\ref{sec:numerics1}.

\item[\bf Step 3.] {\bf Use the deep BSDE method to update $u^k$ and $v^{k},v^{0,k}$.} We generate $N$ samples of initial condition $\check X_0^{k,n}$ and $N$ samples of Brownian motions increment  $\Delta \check{W}^{k,n}_{t_i}$ and $\Delta \check{W}^{0,k,n}_{t_i}$ for  $n = 1,\cdots,N$. Again we denote by $S_M\big(\check{W}^{0,k,n}, t_i\big)$ the truncated signature up to level $M$ for path $\check{W}^{0,k,n}$. Then, we update $u^k$ and $v^{k},v^{0,k}$ by
\begin{equation*}
    \begin{aligned}
    u^k,v^{k},v^{0,k}&= \text{arginf}_{u,v,v^0}\sum_{n=1}^N \Big\|g\Big(\check X_T^{k,n},\cm_5^k\big(\tilde X_{t_{N_T}}^{k,n_1,n_2},\mc{S}^M(\hat W^{0,k,n_2}_{[0,t_N]})\big)\Big)-\check Y_T^{k,n}\Big\|_2^2
    \\
    \text{subject to }\check Y_{0}^{k,n}&=u^{k}(\tilde X_{0}^{k,n}),\quad\tilde Z_{t_i}^{k,n}=v^{k}\big(t_i,\tilde X_{t_{i}}^{k,n},\mc{S}^M(\hat W^{0,k,n_2}_{[0,t_i]})\big),
        \\
        \tilde\Theta_{t_i}^{k,n}&=\big(X_{t_i}^{k,n},Y_{t_i}^{k,n},Z_{t_i}^{k,n}),\quad \tilde Z_{t_i}^{0,k,n}=v^{0,k}\big(t_i,\tilde X_{t_{i}}^{k,n},\mc{S}^M(\hat W^{0,k,n_2}_{[0,t_i]})\big), 
        \\
        \check X_{t_{i+1}}^{k,n} &= \check X_{t_{i}}^{k,n}+b\Big(t_i,\check\Theta_{t_i}^{k,n},Z_{t_i}^{0,k,n},\cm_1^k\big(t_i,\tilde\Theta_{t_i}^{k,n_1,n_2},\mc{S}^M(\hat W^{0,k,n_2}_{[0,t_i]})\big)\Big)\Delta t_i
        \\
        &\qquad+\sigma\Big(t_i,\check \Theta_{t_{i}}^{k,n},\cm_2^k\big(t_i,\tilde\Theta_{t_i}^{k,n_1,n_2},\mc{S}^M(\hat W^{0,k,n_2}_{[0,t_i]})\big)\Big)\Delta \check{W}^{k,n}_{t_i}
        \\
        &\qquad+\sigma^0\Big(t_i,\check \Theta_{t_{i}}^{k,n}, \cm^k_3\big(t_i,\tilde\Theta_{t_i}^{k,n_1,n_2},\mc{S}^M(\hat W^{0,k,n_2}_{[0,t_i]})\big)\Big)\Delta \check{W}^{0,k,n}_{t_i},
        \\
        \check Y_{t_{i+1}}^{k,n} &= \check Y_{t_{i}}^{k,n}-h\Big(t_i,\check\Theta_{t_i}^{k,n},\cm^k_4\big(t_i,\tilde\Theta_{t_i}^{k,n_1,n_2},\mc{S}^M(\hat W^{0,k,n_2}_{[0,t_i]})\big)\Big)\Delta t_i
        \\
        &\qquad+Z_{t_i}^{k,n}\Delta \check{W}^{k,n}_{t_i}+Z_{t_i}^{0,k,n}\Delta \check{B}^{k,n}_{t_i}.
    \end{aligned}
\end{equation*}
where $u^k,v^k,v^{0,k}$ are searched over a class of neural networks. Then we iterate the whole procedure to the next round.
\end{itemize}

\section{Convergence analysis}\label{sec:analysis}

In this section, we analyze the convergence of the proposed algorithm. Recall that $\cW_2$ denotes the 2-Wasserstein distance. 

We are going to focus on the following MV-FBSDE in random environment, which is decoupled in the sense that $(Y,Z)$ does not appear in the dynamics of $X$. Furthermore, we assume that $\sigma$ and $\sigma^0$ depends only on $(t,x)$:
\begin{equation}\label{def:MV-FBSDE}
\begin{dcases}
  \ud X_t = b(t, X_t,  m_1(t, X_t, \MCL(X_t\vert \mc{F}_t^0))) \ud t + \sigma(t, X_t) \ud W_t
  + \sigma^0(t, X_t) \ud W_t^0, \quad X_0 \sim \mu_0,  \\
  \ud Y_t = -h(t, \Theta_t, Z_t^0, m_4(t, X_t, \MCL(X_t \vert \mc{F}_t^0))) \ud t + Z_t \ud W_t + Z_t^0 \ud W_t^0, \quad Y_T = g(X_T, m_5(X_T, \MCL(X_T \vert \mc{F}_T^0))),
\end{dcases}
\end{equation}
for some functions $m_1,m_4,m_5$.

Let $\mc{M}_1$ and $\mc{M}_2$ be the function sets:
\begin{align}\label{def:M}
      &\mathcal{M}_1 = \{m:[0,T]\times \RR^d \times \RR^{d_{\text{sig}}} \rightarrow \RR^\ell, \|m(t,x, s) - m(t',x', s)\|^2 \le M \|x - x'\|^2, \quad  \|m(t,x, s)\| \le M[1 + \|x\|]  \},\\
    &\mathcal{M}_2 =\{m:\RR^d \times \RR^{d_{\text{sig}}} \rightarrow \RR^\ell, \|m(x, s) - m(x', s)\|^2 \le M\|x - x'\|^2, \|m(x, s)\| \le M[1+\|x\|]\},
\end{align}
where we recall that $d_{\text{sig}}$ is the dimension of the truncated signature of $\hat W_t := (t, W_t^0)$, i.e, $d_{\text{sig}} = \frac{(q+1)^{M+1}-1}{q}$.
At the $k^{th}$ iteration, assume supervised learning functions $(m_1^{k}, m_4^{k}, m_5^{k}) \in \mc{M}_1 \times \mc{M}_1 \times \mc{M}_2$, we define 
\begin{equation}\label{def:MV-FBSDE-k}
\begin{dcases}
  \ud X_t^k 
  = b(t, X_t^k, m^k_1(t, X_t^k, \mc{S}^M(\hat W^0_{[0,t]}))) \ud t  + \sigma(t, X_t^k) \ud W_t  + \sigma^0(t, X_t^k) \ud W_t^0, \quad X_0^k \sim \mu_0,  
  \\
  \ud Y_t^k = -h(t,\Theta_t^k, Z_t^{0,k}, m^k_4(t, X_t^k, \mc{S}^M(\hat W^0_{[0,t]}))) \ud t + Z_t^k \ud W_t + Z_t^{0,k} \ud W_t^0, \quad Y_T^k = g(X_T^k, m^k_5(X_T^k, \mc{S}^M(\hat W^0_{[0,T]}))).
\end{dcases}
\end{equation}

To prove convergence, we will use the following assumption. 
\begin{Assumption}\label{assumption bsde}
\begin{enumerate}[(a)]
    \item The functions $b: [0, T]\times \RR^d \times \RR^\ell \to \RR^d $, $\sigma,\sigma^0: [0,T] \times \RR^d  \to \RR^{d \times q}$, $h: [0,T] \times \RR^\theta \times \RR^\ell \to \RR^d$ and $g: \RR^d \times \RR^\ell \to \RR^d$ are Lipschitz with respect to all variables except possibly time, with a Lipschitz constant $L$, i.e.: for all $(t,x,x',m,m',z,z') \in [0, T]\times \RR^d \times \RR^d \times \RR^\ell \times \RR^\ell \times \RR^{d \times q} \times \RR^{d \times q}$
    \begin{align*}
        \|b(t,x,m) - & b(t,x',m')\|^2 + \|\sigma(t,x) - \sigma(t,x')\|_F^2 + \|\sigma^0(t,x) - \sigma^0(t,x')\|_F^2  
        \\
         &+ \|h(t,x, y, z, m) - h(t, x', y', z',m')\|^2 + \|g(x,m) - g(x,m')\|^2  \\
        & \qquad \le  L[\|x-x'\|^2 + \|y-y'\|^2 + \|z-z'\|_F^2 + \|m - m'\|^2].
    \end{align*}
   
    \item The functions $m_1:[0,T]\times\RR^d\times\cP^2(\RR^d)\rightarrow \RR^\ell$, $m_4:[0,T]\times\RR^d\times\cP^2(\RR^d)\rightarrow \RR^\ell$ and $m_5:\RR^d \times\cP^2(\RR^d)\rightarrow \RR^\ell$ are Lipschitz with respect to all variables except possibly time, with the same constant $L$, i.e.:
    \begin{multline*}
       \|m_1(t,x,\mu) - m_1(t,x',\mu')\|^2 + \|m_4(t,x,\mu)  - m_4(t,x',\mu')\|^2 \\
       + \|m_5(x,\mu) - m_5(x',\mu')\|^2 
       \le L[\|x - x'\|^2 + \mathcal{W}_2^2(\mu,\mu')].
    \end{multline*}
    
    \item There exists a constant $K$, such that
    \begin{multline*}
        \|b(t,0, 0)\|^2 + \|\sigma(t,0)\|_F^2 + \|\sigma^0(t,0)\|_F^2 +  \|h(t,0,0)\|^2 +  \|g(0,0)\|^2 +  \|m_1(t,0,\delta_0)\|^2 \\
        +\|m_4(t,0,\delta_0)\|^2 +\|m_5(0,\delta_0)\|^2+ \EE\|X_0\|^2 \le K,
    \end{multline*}
    where $\delta_0$ denotes the Dirac measure at $0$.

\end{enumerate}
\end{Assumption}

\begin{Lemma}
\label{lem:bdd-fbsde-sol}
Under Assumption \ref{assumption bsde}, the forward-backward system~\eqref{def:MV-FBSDE} has a unique solution and the forward-backward system~\eqref{def:MV-FBSDE-k} has a unique solution for all $k$. Moreover,
\begin{equation}
    \EE\left[ \sup_{0 \leq t \leq T} \|X_t\|^2 + \|Y_t\|^2 + \int_0^T (\|Z_t\|^2_F + \|Z_t^0\|^2_F) \ud t \right]  \leq C,
\end{equation}
and 
\begin{equation}
    \EE\left[ \sup_{0 \leq t \leq T} \|X^k_t\|^2 + \|Y^k_t\|^2 + \int_0^T (\|Z^k_t\|^2_F + \|Z_t^{0,k}\|^2_F) \ud t \right]  \leq C,
\end{equation}
for all $k\in\mathbb N$ and some constant $C$ depending on $T,K,L$.
\end{Lemma}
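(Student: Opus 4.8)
The plan is to exploit the decoupled structure of both systems: since $(Y,Z,Z^0)$ does not enter the forward dynamics, I will first establish well-posedness and the $L^2$-moment bound for the forward process, and then treat the backward equation as a standard Lipschitz BSDE whose data are determined by the already-constructed forward solution. A key observation is that the two systems are of different natures. System~\eqref{def:MV-FBSDE} is genuinely of McKean--Vlasov type, since the coefficients depend on the conditional law $\MCL(X_t\vert\mc{F}_t^0)$, which is itself an $\mc{F}^0$-adapted random measure. By contrast, in system~\eqref{def:MV-FBSDE-k} this conditional law is replaced by the signature functional $\mc{S}^M(\hat W^0_{[0,t]})$, which is $\mc{F}_t^0$-measurable; hence~\eqref{def:MV-FBSDE-k} is an ordinary (non-interacting) SDE--BSDE pair with random, $\mc{F}^0$-progressively measurable coefficients. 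The latter is therefore strictly easier, and I treat it as a special case of the arguments below.

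For the forward equation of~\eqref{def:MV-FBSDE}, I would run a fixed-point argument on the space of $\mc{F}^0$-progressively measurable flows of probability measures $(\mu_t)_{t\in[0,T]}$ with $\sup_t\EE[\cW_2^2(\mu_t,\delta_0)]<\infty$. Given such a flow, the frozen SDE $\ud X_t=b(t,X_t,m_1(t,X_t,\mu_t))\ud t+\sigma(t,X_t)\ud W_t+\sigma^0(t,X_t)\ud W_t^0$ has coefficients that are Lipschitz and of linear growth in $x$ (by Assumption~\ref{assumption bsde}(a)--(b)), hence a unique strong solution; I then set $\Phi(\mu)_t:=\MCL(X_t\vert\mc{F}_t^0)$. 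To show $\Phi$ is a contraction I would drive two solutions $X,X'$ associated with flows $\mu,\mu'$ by the same Brownian motions and initial condition, apply Assumption~\ref{assumption bsde}(a)--(b) together with Burkholder--Davis--Gundy and Gronwall to obtain $\EE[\sup_{s\le t}\|X_s-X_s'\|^2]\le C\int_0^t\EE[\cW_2^2(\mu_s,\mu_s')]\ud s$, and finally use the synchronous-coupling bound $\EE[\cW_2^2(\MCL(X_t\vert\mc{F}_t^0),\MCL(X_t'\vert\mc{F}_t^0))]\le\EE\|X_t-X_t'\|^2$ (the joint conditional law of $(X_t,X_t')$ is an admissible coupling) to close the estimate and conclude existence and uniqueness by iterating over $[0,T]$; alternatively this is covered by~\cite[Chapter~5]{CarmonaDelarue_book_II}. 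The moment bound $\EE[\sup_{t\le T}\|X_t\|^2]\le C$ follows from the same tools: the linear growth supplied by Assumption~\ref{assumption bsde}(c) (controlling $\|b(t,0,0)\|$, $\|m_1(t,0,\delta_0)\|$, etc.) yields $\EE[\sup_{s\le t}\|X_s\|^2]\le C(1+\int_0^t\EE[\sup_{r\le s}\|X_r\|^2]\ud s)$, and Gronwall concludes.

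For the backward equation, once $X$ is fixed the terminal value $\xi=g(X_T,m_5(X_T,\MCL(X_T\vert\mc{F}_T^0)))$ is $\mc{F}_T$-measurable and square integrable (by the linear growth of $g$ and $m_5$ together with the forward moment bound), while the generator $(t,y,z,z^0)\mapsto h(t,X_t,y,z,z^0,m_4(t,X_t,\MCL(X_t\vert\mc{F}_t^0)))$ is Lipschitz in $(y,z,z^0)$ by Assumption~\ref{assumption bsde}(a) and square integrable when evaluated at $(y,z,z^0)=0$ (again by linear growth in $x$ and the forward bound). Stacking $\bar W=(W,W^0)$ and $\bar Z=(Z,Z^0)$ turns this into a standard Lipschitz BSDE driven by the $2q$-dimensional $\bar W$, so the Pardoux--Peng theory yields a unique solution together with the a priori estimate $\EE[\sup_t\|Y_t\|^2+\int_0^T\|\bar Z_t\|_F^2\ud t]\le C$, giving the asserted backward bound. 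For~\eqref{def:MV-FBSDE-k} every step is identical after replacing the conditional laws by the signature functionals; crucially, since $(m_1^k,m_4^k,m_5^k)\in\mc{M}_1\times\mc{M}_1\times\mc{M}_2$ have Lipschitz and growth constants bounded by the single constant $M$ independent of $k$, all the estimates hold with a constant $C$ that does not depend on $k$.

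I expect the main obstacle to be the McKean--Vlasov fixed point for~\eqref{def:MV-FBSDE} in the random environment, specifically the handling of the $\mc{F}^0$-adapted random measure flow and the conditional Wasserstein contraction estimate; the synchronous-coupling inequality is the crux that lets the interaction term be absorbed by Gronwall, and some care with regular conditional distributions and measurable selection is needed to make it rigorous. Everything else --- the forward moment bound, the reduction of the backward part to a Lipschitz BSDE, and the entire treatment of the signature-based system~\eqref{def:MV-FBSDE-k} --- is routine given Assumption~\ref{assumption bsde}, and the uniformity of the bound in $k$ is inherited directly from the uniform constant $M$ in the definitions of $\mc{M}_1$ and $\mc{M}_2$.
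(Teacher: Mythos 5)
Your proposal is correct, and its overall architecture matches the paper's: you exploit the decoupled structure (solve the forward equation first, then treat the backward equation as a standard Lipschitz BSDE with random coefficients), you observe that system~\eqref{def:MV-FBSDE-k} is an ordinary SDE--BSDE pair because $\mc{S}^M(\hat W^0_{[0,t]})$ is adapted, and you get uniformity in $k$ from the common constant $M$ in $\mathcal{M}_1,\mathcal{M}_2$. Where you genuinely diverge is the treatment of the McKean--Vlasov forward equation in~\eqref{def:MV-FBSDE}. The paper runs a single-level Picard iteration directly on processes: $\tilde X^{n+1}$ is defined by integrating coefficients evaluated entirely at $\tilde X^{n}$ (state and conditional law alike), and the contraction is obtained in the exponentially weighted norm $\EE\int_0^T e^{-\lambda t}\|\cdot\|^2\,\ud t$ via It\^o's formula, mirroring the proof of \cite[Theorem 3.3.1]{zhang2017backwardbook}. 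You instead run a two-level argument: an outer fixed point on $\mc{F}^0$-adapted flows of measures, with the inner frozen-measure SDE solved by invoking standard theory for random Lipschitz coefficients, and contraction of the map $\mu \mapsto \MCL(X^\mu_t\vert\mc{F}^0_t)$ closed by iterating the map (the $C^nT^n/n!$ factor) or by partitioning the horizon. Both arguments hinge on exactly the same crux, namely the conditional synchronous-coupling inequality $\EE[\cW_2^2(\MCL(X_t\vert\mc{F}^0_t),\MCL(X'_t\vert\mc{F}^0_t))]\le \EE\|X_t-X'_t\|^2$, which the paper uses implicitly in its estimate of $\Delta b^{n}_t$. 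Your route is more modular and is the classical Sznitman/Carmona--Delarue scheme, consistent with your fallback citation of \cite[Chapter 5]{CarmonaDelarue_book_II}; the paper's route is more self-contained, avoids introducing a metric space of random measure flows (and the attendant completeness and measurable-selection issues you rightly flag as the delicate point), and delivers the contraction on all of $[0,T]$ in a single pass.
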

\begin{proof}
\noindent{\bf Step 1: Well-posedness of \eqref{def:MV-FBSDE-k}. } 
Observe that $\mathcal{S}^M(\hat W_{[0,t]}^0)$, being the truncated signature up to time $t$, is measurable with respect to $\mc F_t$. Define 
$$
    \tilde b ^k(t,x,\omega)\coloneqq b\Big(t,x,m_1^k\big(t,x,\cS^M(\hat W^0_{[0,t]})(\omega)\big)\Big),\text{ which is }\mathbb{F}\text{-progressively measurable}.
$$
By Assumption \ref{assumption bsde} and the fact that $m_1^k \in \mc{M}_1$, the Lipschitz continuity of $\tilde b^k$ in $x$ can be verified. Therefore, the well-posedness of the forward component $X^k$ follows from \cite[Theorem~3.3.1]{zhang2017backwardbook}. Then, by~\cite[Theorem 3.2.2]{zhang2017backwardbook}, $\{X_t^k\}_{t\in[0,T]}$ is $\mathbb{F}$-progressively measurable and  satisfies
$$
\EE\left[ \sup_{0 \leq t \leq T} \|X_t^k\|^2\right]\leq C,
$$
where the constant $C$ depends on $T,K,L,M$ and the initial distribution but is independent of $k$. Next, to handle the backward component of \eqref{def:MV-FBSDE-k}, define 
$$
    \tilde h^k(t,y,z,z^0,\omega)\coloneqq h\Big(t,X_t^k(\omega),y,z,z^0,m_4^k\big(t,X_t^k(\omega),\mathcal{S}^M(\hat W^0_{[0,t]})(\omega)\big)\Big),
$$
which is again $\mathbb{F}${-progressive measurable}. Note that $X_T^k$ is square integrable and measurable with respect to $\mc F_T=\sigma(W_t, W_t^0, t \leq T)$. By Assumption~\ref{assumption bsde}, the Lipschitz property of $\tilde h^k$ in $(y, z, z^0)$ and the square-integrability condition on $\tilde h^k(0,0,0,0)$ are satisfied. Hence, applying  \cite[Theorem 4.3.1]{zhang2017backwardbook} gives the existence and uniqueness of the BSDE solution $(Y^k,Z^k, Z^{0,k})$, with the estimate 
$$
\EE\left[  \|Y^k_t\|^2 + \int_0^T (\|Z^k_t\|^2_F + \|Z_t^{0,k}\|^2_F) \ud t \right]  \leq C,
$$
where $C$ depends on $T,K,L,M$ and the initial condition but is independent of $k$.

\noindent{\bf Step 2: Well-posedness of \eqref{def:MV-FBSDE}. }
Note that \cite[Theorem 3.2.2]{zhang2017backwardbook} cannot be directly applied due to the presence of the term $\mc L(X_t|\mc F_t^0)$. Nevertheless, we follow a similar strategy, employing Picard iterations to construct a strong solution. We adopt the notation $\mathbb{S}^2(\mathbb{F})$ from \cite{zhang2017backwardbook}, defined as  
$$
    \mathbb{S}^2(\mathbb{F})\coloneqq\Big\{\{X_t\}_{t\in[0,T]}\text{ is }\mathbb{F}\text{-progressively measurable}:\mathbb{E}\big[\sup_{0\leq t\leq T}\|X_t\|^2\big]<\infty\Big\}.
$$
Define a sequence  $\{\tilde X^{n}_t\}_{t\in[0,T]}$ recursively by setting $\tilde X_t^0\equiv X_0$ for $t\in[0,T]$, and for $n\geq 0$
\begin{equation}\label{Xk}
    \ud \tilde X^{n+1}_t = b\Big(t, \tilde X^{n}_t,  m_1\big(t, \tilde X^{n}_t, \MCL(\tilde X^{n}_t\vert \mc{F}_t^0)\big)\Big) \ud t + \sigma(t, \tilde X^{n}_t) \ud W_t
  + \sigma^0(t, \tilde X^{n}_t) \ud W_t^0, \quad \tilde X^{n}_0 = X_0.
\end{equation}
Using Assumption \ref{assumption bsde}, it is straightforward to verify that for all $n \geq 0$
\begin{align}\label{supboundX}
    \EE\Big[\sup_{t\in[0,T]}\|\tilde X^{n}_t\|^2\Big]< \infty.
\end{align}
Indeed, by applying the Burkholder–Davis–Gundy inequality and Assumption~\ref{assumption bsde}, we have
\begin{align*}
    \EE&\Big[\sup_{t\in[0,T]}\|\tilde X^{n+1}_t\|^2\Big]
    \\&\leq C\int_0^T\bigg(\Big\|\bigg(b\Big(t, \tilde X^{n}_t,  m_1\big(t, \tilde X^{n}_t, \MCL(\tilde X^{n}_t\vert \mc{F}_t^0)\big)\Big)\Big\| ^2+\text{Trace}\Big(\sigma(t,\tilde X^{n}_t)\sigma(t,\tilde X^{n}_t)^\top+\sigma^0(t,\tilde X^{n}_t)\sigma^0(t,\tilde X^{n}_t)^\top\Big)\bigg)\ud t
    \\
    &\leq C\Big(1+\EE\Big[\sup_{t\in[0,T]}\|\tilde X^{n}_t\|^2\Big]\Big),
\end{align*}
where the last inequality holds due to Assumption \ref{assumption bsde} and thus inductively confirms \eqref{supboundX}. Define $\Delta X_t^{n}\coloneqq \tilde X^{n}_t-\tilde X^{n-1}_t$, $\Delta \sigma^{n}_t\coloneqq \sigma(t,\tilde X^{n}_t)-\sigma(t,\tilde X^{n-1}_t)$ and $\Delta \sigma_t^{0,n}\coloneqq \sigma^0(t,\tilde X^{n}_t)-\sigma^0(t,\tilde X^{n-1}_t)$ and 
$$
    \Delta b^{n}_t\coloneqq b\Big(t, \tilde X^{n}_t,  m_1\big(t, \tilde X^{n}_t, \MCL(\tilde X^{n}_t\vert \mc{F}_t^0)\big)\Big)-b\Big(t, \tilde X^{n-1}_t,  m_1\big(t, \tilde X^{n-1}_t, \MCL(\tilde X^{n-1}_t\vert \mc{F}_t^0)\big)\Big),
$$
for $k\geq 1$. Then $\Delta X^{n+1}_t$ satisfies the SDE:
$$
    \ud \Delta X^{n+1}_t=\Delta b^{n}_t\ud t +\Delta \sigma^{n}_t\ud W_t+\Delta \sigma^{0,n}_t\ud W^0_t.
$$
Applying It\^o's formula, we obtain
\begin{align*}
    \ud e^{-\lambda t}\big\|\Delta X^{n+1}_t\big\|^2=&e^{-\lambda t}\Big(2\Delta X^{n+1}_t\cdot \Delta b^{n}_t+\text{Trace}\big(\Delta \sigma^{n}_t(\Delta \sigma^{n}_t)^\top+\Delta \sigma^{0,n}_t(\Delta \sigma^{0,n}_t)^\top\big)-\lambda \big\|\Delta X^{n+1}_t\big\|^2\Big)\ud t
    \\
    &+2e^{-\lambda t}\Delta X^{n+1}_t\cdot \big(\Delta \sigma^{n}_t\ud W_t+\Delta \sigma^{0,n}_t\ud W^0_t\big).
\end{align*}
Since $\Delta X^{n+1}_0=0$, and  $e^{-\lambda t}\Delta X^{n+1}_t\cdot \big(\Delta \sigma^{n}_t\ud W_t+\Delta \sigma^{0,n}_t\ud W^0_t\big)$ is a true martingale due to \eqref{supboundX} and \cite[Problem 2.10.7]{zhang2017backwardbook}, we have

\begin{align}\label{itobdd}
\lambda \EE\bigg[\int_0^Te^{-\lambda t}\big\|\Delta X^{n+1}_t\big\|^2\ud t\bigg]\leq \EE\bigg[\int_0^T\bigg(e^{-\lambda t}\Big(2\Delta X^{n+1}_t\cdot \Delta b^{n}_t+\text{Trace}\big(\Delta \sigma^{n}_t(\Delta \sigma^{n}_t)^\top+\Delta \sigma^{0,n}_t(\Delta \sigma^{0,n}_t)^\top\big)\Big)\ud t\bigg)\bigg].
\end{align}
Using Assumption \ref{assumption bsde}, and the definition of $\Delta\sigma^{n}_t$, we derive the bounds
\begin{align}\label{sigmabd}
    \text{Trace}\big(\Delta \sigma^{n}_t(\Delta \sigma^{n}_t)^\top\big)=\|\Delta \sigma^{n}_t\|_F^2=\|\sigma(t,\tilde X^{n}_t)-\sigma(t,\tilde X^{n-1}_t)\|_F^2\leq L\|\Delta X^{n}_t\|^2,
\end{align}
and similarly 
\begin{align}\label{sigma0bd}
    \text{Trace}\big(\Delta \sigma^{0,n}_t(\Delta \sigma^{0,n}_t)^\top\big)\leq  L\|\Delta X^{n}_t\|^2.
\end{align} As for the term of $\Delta b^{n}_t$, we have
\begin{equation}\label{bbd}
\begin{aligned}
    \EE\Big[\big\|&\Delta b^{n}_t\big\|^2\Big]=\EE\bigg[\Big\|b\Big(t, \tilde X^{n}_t,  m_1\big(t, \tilde X^{n}_t, \MCL(\tilde X^{n}_t\vert \mc{F}_t^0)\big)\Big)-b\Big(t, \tilde X^{n-1}_t,  m_1\big(t, \tilde X^{n-1}_t, \MCL(\tilde X^{n-1}_t\vert \mc{F}_t^0)\big)\Big)\Big\|^2\bigg]
    \\
    &\leq \EE\bigg[L\Big(\big\|\tilde X^{n}_t-\tilde X^{n-1}_t\|^2+\Big\|m_1\big(t, \tilde X^{n}_t, \MCL(\tilde X^{n}_t\vert \mc{F}_t^0)\big)-m_1\big(t, \tilde X^{n-1}_t, \MCL(\tilde X^{n-1}_t\vert \mc{F}_t^0)\big)\Big\|^2\Big)\bigg]
    \\
    &\leq \EE\bigg[L\Big(\big\|\tilde X^{n}_t-\tilde X^{n-1}_t\|^2+L\Big(\big\|\tilde X^{n}_t-\tilde X^{n-1}_t\|^2+\mc W_2^2\big(\MCL(\tilde X^{n}_t\vert \mc{F}_t^0),\MCL(\tilde X^{n-1}_t\vert \mc{F}_t^0)\big)\Big)\Big)\bigg]
    \\
    &\leq (2L^2+L)\EE\big[\big\|\Delta X^{n}_t\big\|^2\big].
\end{aligned}
\end{equation}

Combining these estimates with \eqref{itobdd}, we obtain
$$
    \lambda \EE\bigg[\int_0^Te^{-\lambda t}\big\|\Delta X^{n+1}_t\big\|^2\ud t\bigg]\leq \EE\bigg[\int_0^T\bigg(e^{-\lambda t}\Big(\big\|\Delta X^{n+1}_t\big\|^2+\big((2L^2+L)+2L\big)\big\|\Delta X^{n}_t\big\|^2\Big)\ud t\bigg)\bigg].
$$
Set $\lambda\coloneqq 1+4\big(2L^2+3L\big)$. Then, we have the contraction property
$$
    \EE\bigg[\int_0^Te^{-\lambda t}\big\|\Delta X^{n+1}_t\big\|^2\ud t\bigg]\leq \frac14\EE\bigg[\int_0^Te^{-\lambda t}\big\|\Delta X^{n}_t\big\|^2\ud t\bigg].
$$
Following the argument in \cite[Theorem 3.3.1]{zhang2017backwardbook}, there exists a limit $\mathcal{X}\in\mathbb{S}^2(\mathbb{F})$ such that
$$
    \EE\Big[\sup_{0\leq t\leq T}\big\|\tilde X^{n}_t-\mathcal{X}_t\big\|^2\Big]\to 0,\text{ for }n\to\infty,
$$
that is $\tilde X^{n}\to \mathcal{X}$ in $\mathbb{S}^2(\mathbb{F})$. Taking $n \to \infty$ in \eqref{Xk}, we obtain that $\mathcal{X}$ satisfy the following
$$
    \ud \mathcal{X}_t = b\Big(t, \mathcal{X}_t,  m_1\big(t, \mathcal{X}_t, \MCL(\mathcal{X}_t\vert \mc{F}_t^0)\big)\Big) \ud t + \sigma(t, \mathcal{X}_t) \ud W_t
      + \sigma^0(t, \mathcal{X}_t) \ud W_t^0, \quad \mathcal{X}_0 = X_0,
$$
which verify that $\mathcal{X}$ solves the forward equation of \eqref{def:MV-FBSDE}. 

The well-posedness of the backward equation of \eqref{def:MV-FBSDE} follows analogously to the previous case of equation \eqref{def:MV-FBSDE-k}. The uniform square-integrability estimates follow immediately from the above arguments, thus completing the proof.

\end{proof}

We prove the following result, which is an extension of~\cite[Theorem 3.7]{hanhulong2022learning}. 
\begin{Theorem}
\label{thm:main-convergence}
Let $(X,Y,Z,Z^0)$ be the solution to the MV-FBSDE~\eqref{def:MV-FBSDE} and $(X^k,Y^k,Z^k,Z^{0,k})$ be the solution to~\eqref{def:MV-FBSDE-k}. Then, there exist constants $C>0$ and $0<q<1$ depending only on the data of the problem such that:
\begin{align*}
    &\sup_{0 \le t \le T} \left[\EE\|X_t - X^k_t\|^2 + \EE\|Y_t - Y^k_t\|^2 \right] + \int_0^T \left[\EE\|Z_t - Z^k_t\|^2 + \EE\|Z^{0}_t - Z^{0,k}_t\|^2 \right]\ud t
    \\
    \le 
    & \, C \Big\{q^k + \sum_{j=0}^{k-1}q^{k-j} \int_0^T \EE \|m_1^{j+1}(t, X^{j+1}_t, \mc{S}^M(\hat W^0_{[0,t]})) - m_1(t, X^{j+1}_t, \MCL(X_t^j \vert \mc{F}_t^0))\|^2 \ud t 
    \\
    & \qquad\qquad\qquad\qquad + \int_0^T \EE \|m_4^{k}(t, X^{k}_t, \mc{S}^M(\hat W^0_{[0,t]})) - m_4(t, X^{k}_t, \MCL(X_t^{k-1} \vert \mc{F}_t^0))\|^2 \ud t 
    \\
    & \qquad\qquad\qquad\qquad + \EE \|m_5^{k}(X^{k}_T, \mc{S}^M(\hat W^0_{[0,T]})) - m_5(X^{k}_T, \MCL(X_T^{k-1} \vert \mc{F}_T^0))\|^2 \Big\}.
\end{align*}

\end{Theorem}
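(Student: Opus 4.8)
The plan is to treat the three learning-error terms on the right-hand side as given source quantities --- write $\mathcal{E}_1^{j+1} := \int_0^T \EE\|m_1^{j+1}(t,X_t^{j+1},\mathcal{S}^M(\hat W^0_{[0,t]})) - m_1(t,X_t^{j+1},\mathcal{L}(X_t^j\vert\mathcal{F}_t^0))\|^2\ud t$, and similarly $\mathcal{E}_4^k$, $\mathcal{E}_5^k$ --- and to show that the error $(\Delta X^k,\Delta Y^k,\Delta Z^k,\Delta Z^{0,k}) := (X-X^k,Y-Y^k,Z-Z^k,Z^0-Z^{0,k})$ obeys a linear recursion in the fictitious-play index $k$ with a geometric contraction. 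The argument factors into a forward estimate for $\Delta X^k$ and a backward estimate for $(\Delta Y^k,\Delta Z^k,\Delta Z^{0,k})$, glued by the elementary bound $\EE[\mathcal{W}_2^2(\mathcal{L}(X_s\vert\mathcal{F}_s^0),\mathcal{L}(X_s^{k-1}\vert\mathcal{F}_s^0))] \le \EE\|X_s-X_s^{k-1}\|^2$, which holds because the common joint law is an admissible coupling of the two conditional marginals.

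First I would treat the forward equation by applying It\^o's formula to $e^{-\lambda t}\|\Delta X_t^k\|^2$, exactly as in the proof of Lemma~\ref{lem:bdd-fbsde-sol}. The decisive structural point is that the drift of $X^k$ evaluates the measure functional at the \emph{previous} iterate's conditional law $\mathcal{L}(X^{k-1}\vert\mathcal{F}^0)$, while $\sigma,\sigma^0$ do not depend on the measure at all. Inserting the intermediate term $b(t,X_t^k,m_1(t,X_t^k,\mathcal{L}(X_t^{k-1}\vert\mathcal{F}_t^0)))$ and using Assumption~\ref{assumption bsde}(a)--(b), the drift mismatch splits into a Lipschitz piece controlled by $\|\Delta X_t^k\|^2$ and $\mathcal{W}_2^2(\mathcal{L}(X_t\vert\mathcal{F}_t^0),\mathcal{L}(X_t^{k-1}\vert\mathcal{F}_t^0))$, plus precisely the $m_1$-learning error at step $k$. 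Crucially, the $\sigma,\sigma^0$ differences and the $\|\Delta X_t^k\|^2$ piece couple only to the current index and are absorbed by taking $\lambda$ large, whereas the conditional Wasserstein term --- through the glue bound --- couples to $\Delta X^{k-1}$. This produces, for the weighted quantity $A_k := \int_0^T e^{-\lambda s}\EE\|\Delta X_s^k\|^2\ud s$, a contraction $A_k \le \tfrac12 A_{k-1} + C\mathcal{E}_1^k$ with \emph{no} smallness-in-$T$ requirement; iterating it and converting back via $e^{-\lambda T}\sup_t\EE\|\Delta X_t^k\|^2 \le \sup_t\EE[e^{-\lambda t}\|\Delta X_t^k\|^2]$ yields $\sup_t\EE\|\Delta X_t^k\|^2 \le C\big(q^k + \sum_{j=0}^{k-1}q^{k-j}\mathcal{E}_1^{j+1}\big)$, with $q=\tfrac12$ and $A_0\le C$ furnished by Lemma~\ref{lem:bdd-fbsde-sol}.

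Next I would bound the backward equation, which is decoupled from the forward dynamics. A standard a priori estimate for Lipschitz BSDEs controls $\sup_t\EE\|\Delta Y_t^k\|^2 + \int_0^T\EE[\|\Delta Z_t^k\|^2 + \|\Delta Z_t^{0,k}\|^2]\ud t$ by $C\EE\|\Delta Y_T^k\|^2$ plus $C\int_0^T\EE\|\delta H_t\|^2\ud t$, where $\delta H_t$ is the driver mismatch after freezing $(Y,Z,Z^0)$ at the true solution (the Lipschitz-in-$(y,z,z^0)$ part being absorbed on the left). Decomposing $\delta H_t$ and the terminal mismatch through the intermediates $m_4(t,X_t^k,\mathcal{L}(X_t^{k-1}\vert\mathcal{F}_t^0))$ and $m_5(X_T^k,\mathcal{L}(X_T^{k-1}\vert\mathcal{F}_T^0))$, Assumption~\ref{assumption bsde}(a)--(b) and the glue bound reduce them to $\EE\|\Delta X_\cdot^k\|^2$, $\EE\|\Delta X_\cdot^{k-1}\|^2$, and the learning errors $\mathcal{E}_4^k,\mathcal{E}_5^k$. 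Substituting the forward estimate for both $k$ and $k-1$ (absorbing $q^{k-1}\le q^{-1}q^k$ and reindexing the geometric sum) delivers exactly the four groups in the statement.

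The step I expect to be the main obstacle is securing the forward contraction: one must check that the coefficient of $A_{k-1}$ can be made strictly below one uniformly in $k$, which relies entirely on the measure entering the iterate only through the previous law and on $\sigma,\sigma^0$ being measure-independent --- were $\sigma,\sigma^0$ to depend on $\mathcal{L}(X\vert\mathcal{F}^0)$, the diffusion mismatch would also couple to $\Delta X^{k-1}$ and could not be dominated by the $e^{-\lambda t}$ weight alone. Everything else is bookkeeping: propagating constants through the It\^o/Gr\"onwall step and aligning the powers of $q$ with the summation indices of the stated bound.
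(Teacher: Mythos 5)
Your proposal is correct, and its overall architecture coincides with the paper's proof: you insert the intermediate terms $m_1(t,X_t^k,\mathcal{L}(X_t^{k-1}\vert\mathcal{F}_t^0))$ (resp.\ $m_4$, $m_5$), use the coupling bound $\EE[\mathcal{W}_2^2(\mathcal{L}(X_s\vert\mathcal{F}_s^0),\mathcal{L}(X_s^{k-1}\vert\mathcal{F}_s^0))]\le \EE\|X_s-X_s^{k-1}\|^2$ to convert the measure mismatch into $\delta X^{k-1}$, control the backward triple by a standard Lipschitz-BSDE stability estimate, and then substitute the forward bound at indices $k$ and $k-1$ (absorbing $q^{k-1}\le q^{-1}q^k$). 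The one genuine difference is the mechanism that produces the geometric factor: the paper iterates the plain inequality $\sup_{0\le s\le t}\EE\|\delta X^k_s\|^2 \le C\int_0^t \EE\|\delta X^{k-1}_s\|^2\,\ud s + C\,\mathcal{E}_1^k$ in $k$, obtaining coefficients $(CT)^j/j!$ and only then choosing $q\in(0,1)$ with $C^j/j!\le\tilde C q^j$ (factorial decay dominating any geometric sequence), whereas you introduce the weight $e^{-\lambda t}$ and take $\lambda$ large to manufacture a genuine one-step contraction $A_k\le\tfrac12 A_{k-1}+C\,\mathcal{E}_1^k$. Both routes are valid, neither requires smallness of $T$, and they trade off cosmetically: yours gives an explicit rate $q=\tfrac12$ at the price of constants of order $e^{\lambda T}$, while the paper's keeps the unweighted norms and defers the choice of $q$ to the end. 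One side remark of yours is inaccurate, though it does not affect the proof of the stated theorem: if $\sigma,\sigma^0$ did depend on the conditional law, the weighted-norm argument would \emph{not} break, since the resulting cross-index diffusion term enters as $L\,\EE\|\delta X_s^{k-1}\|^2$ with a fixed coefficient, so the contraction factor behaves like $(C+2L)/(\lambda-C)$ and can still be made smaller than one by enlarging $\lambda$; measure-independence of $\sigma,\sigma^0$ is genuinely needed elsewhere in the paper (the Girsanov argument of Lemma~\ref{lem:Girsanov}), not in this estimate.
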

\begin{proof}
    In the sequel, $C$ denotes a constant depending only on the data of the problem, whose value may change from one line to another. 
    We estimate the difference between $\Theta$ and $\Theta^k$. Let $\delta X^k_t = X_t - X^k_t$, $\delta Y^k_t = Y_t - Y^k_t$, $\delta Z^k_t = Z_t - Z^k_t$, $\delta Z^{0,k}_t = Z^0_t - Z^{0,k}_t$.

    We define:
    $$
        I^k_1 \coloneqq \int_0^T \EE \| m_1(t, X_t^{k}, \MCL(X_t^{k-1}\vert \mc{F}_t^0)) - m^k_1(t, X_t^k, \mc{S}^M(\hat W^0_{[0,t]}))\|^2 \ud t.
    $$
    
    By~\cite[Theorem 3.2.4]{zhang2017backwardbook}, the Lipschitz property of $b$, the definition of $I^k_1$, the assumption on $m_1$ and the definition of the distance $\mathcal{W}_2$, we have: 
    \begin{align*}
        \sup_{0 \le s \le t} \EE \|\delta X^k_s\|^2
        & \le C \int_0^t \EE \|b(s, X_s^k, m_1(s, X_s^k, \MCL(X_s\vert \mc{F}_s^0)))) - b(s, X_s^k, m^k_1(s, X_s^k, \mc{S}^M(\hat W^0_{[0,s]})))\|^2 \ud s
        \\
        & \le C \int_0^t \EE \| m_1(s, X_s^k, \MCL(X_s\vert \mc{F}_s^0))) - m^k_1(s, X_s^k, \mc{S}^M(\hat W^0_{[0,s]})))\|^2 \ud s
        \\
        & \le C \int_0^t \EE \| m_1(s, X_s^k, \MCL(X_s\vert \mc{F}_s^0))) - m_1(s, X_s^{k}, \MCL(X_s^{k-1}\vert \mc{F}_s^0)))\|^2 \ud s + C I^k_1
        \\
        & \le C \int_0^t \EE\left[ \mathcal{W}^2_2(\mc{L}(X_s|\mc{F}^0_s), \mc{L}(X^{k-1}_s|\mc{F}^0_s)) \right] \ud s + C I^k_1
        \\
        & \le C \int_0^t \EE\|\delta X^{k-1}_s \|^2 \ud s + C I^k_1.
    \end{align*}
    
    Then, by induction, one has: 
    \begin{equation}\label{eq:convergence-induction-k}
        \int_0^t \EE\|\delta X^{k}_s \|^2 \ud s \le \frac{C^k}{k!} \int_0^t (t-s)^k \EE \|\delta X^{0}_s \|^2 \ud s + \sum_{j=1}^k \frac{(C t)^j}{j!} I_1^{k+1-j}.
    \end{equation}
    
    From Lemma~\ref{lem:bdd-fbsde-sol}, one can easily deduce that $\EE [\sup_{t \in [0,T]}  \| \delta X^{0}_t \|^2] \leq \EE [\sup_{t \in [0,T]}  \| X_t \|^2] + \EE [\sup_{t \in [0,T]}  \|  X^{0}_t \|^2] \le C$.
    Going back to~\eqref{eq:convergence-induction-k}, we obtain:
    \begin{equation}
    \label{eq:induction-bound-int-deltaXk}
        \int_0^T \EE\|\delta X^{k}_s \|^2 \ud s 
        \le \frac{(CT)^k}{k!} + \sum_{j=1}^k \frac{(C T)^j}{j!} I_1^{k+1-j} 
        \le \tilde{C} \bigg(q^k + \sum_{j=0}^{k-1} q^{k-j} I_1^{j+1}\bigg),
    \end{equation}
    for some constants $\tilde{C}>0$ and $0<q<1$ such that $\frac{C^j}{j!} \le \tilde{C} q^j$ for all $j \in \mathbb{N}^+$. 
    
    We proceed similarly for $(Y,Z,Z^0)$. More precisely, let
    \begin{align}
         &I_4^k = \int_0^T \EE\|m_4(t, X_t^k, \MCL(X_t^{k-1}\vert \mc{F}_t^0)) - m^k_4(t, X_t^k, \mc{S}^M(\hat W^0_{[0,t]}))\|^2 \ud t, \\
         &I_5^k = \EE\|m_5(X_T^k, \MCL(X_T^{k-1}\vert \mc{F}_t^0)) - m^k_5(X_T^k, \mc{S}^M(\hat W^0_{[0,T]}))\|^2.
    \end{align}
    Using~\cite[Theorem 4.2.3]{zhang2017backwardbook}, we have:
    \begin{align*}
        &\sup_{t \le s \le T} \EE \|\delta Y^k_s\|^2 + \EE \int_t^T \|\delta Z^k_s\|^2 \ud s + \EE \int_t^T \|\delta Z^{0,k}_s\|^2 \ud s
        \\
        & \le C \int_t^T \EE \|h(s, X_s, Y_s^k, Z_s^k, Z_s^{0,k}, m_4(s, X_s, \MCL(X_s\vert \mc{F}_s^0)))) - h(s, \Theta_s^k, Z_s^{0,k}, m^k_4(s, X_s^k, \mc{S}^M(\hat W^0_{[0,s]})))\|^2 \ud s \\
        & \qquad + C\EE \|g(X_T, m_5(X_T, \mc{L}(X_T | \mc{F}_T^0)) - g(X_T^k, m_5^k(X_T^k, \mc{S}^M(\hat W^0_{[0,T]}))\|^2
        \\
        & \le C \int_t^T \EE \| \delta X^{k}_s \|^2 + \EE \|m_4(s, X_s^k, \MCL(X_s\vert \mc{F}_s^0)) - m_4(s, X_s^k, \MCL(X_s^{k-1}\vert \mc{F}_s^0))\|^2 \ud s  + C I_4^k \\
        & \qquad + C\EE \| \delta X^{k}_T \|^2 + C\EE\|m_5(X_T^k, \mc{L}(X_T | \mc{F}_T^0)) - m_5(X_T^k, \mc{L}(X_T^{k-1}| \mc{F}_T^0))\|^2 + CI_5^k
        \\
        & \le C \int_t^T \EE [\mathcal{W}_2^2 (\MCL(X_s\vert \mc{F}_s^0),  \MCL(X_s^{k-1}\vert \mc{F}_s^0))] \ud s  + C I_4^k + C\EE[\mathcal{W}_2^2 (\MCL(X_T\vert \mc{F}_T^0),  \MCL(X_T^{k-1}\vert \mc{F}_T^0))] + C I_5^k \\
        & \qquad + C\int_t^T \EE\| \delta X^{k}_s \|^2 \ud s + C \EE \| \delta X^{k}_T \|^2
        \\
        & \le C \int_t^T \EE \| \delta X^{k-1}_s \|^2 \ud s + C I_4^k + C\EE \| \delta X^{k-1}_T\|^2 + CI_5^k   +  C\int_t^T \EE\| \delta X^{k}_s \|^2 \ud s + C \EE \| \delta X^{k}_T \|^2 
        \\
        & \le C \int_0^T \EE \| \delta X^{k-1}_s \|^2 \ud s + C I_4^k + C\int_0^T \EE \| \delta X^{k-2}_s \|^2 \ud s + CI_1^{k-1} + CI_5^k + C I_1^k,
    \end{align*}
    where in the last inequality, we have used 
    $$
    \int_t^T \EE\| \delta X^{k}_s \|^2 \ud s \leq C \EE \| \delta X^{k}_T \|^2 \leq C\int_0^T \EE\|\delta X^{k-1}_s \|^2 \ud s + C I^k_1.
    $$
    
    Using the bound~\eqref{eq:induction-bound-int-deltaXk}, we get:
    \begin{align}
        \sup_{0 \le s \le T} \EE \|\delta Y^k_s\|^2 + \EE \int_0^T \|\delta Z^k_s\|^2 \ud s + \EE \int_0^T \|\delta Z^{0,k}_s\|^2 \ud s
         \leq C q^k + C\sum_{j=0}^{k-1} q^{k-j} I_1^{j+1} +C I_4^k + C I_5^k.
    \end{align}
    Combining the terms, we get:
    \begin{align*}
        &\sup_{0 \le s \le T} \EE \|\delta X^k_s\|^2
        + \sup_{0 \le s \le T} \EE \|\delta Y^k_s\|^2 + \EE \int_0^T \|\delta Z^k_s\|^2 +  \|\delta Z^{0,k}_s\|^2 \ud s 
        \le 
        C \left(q^k + \sum_{j=0}^{k-1} q^{k-j} I_1^{j+1} + I_4^k + I_5^k \right).
    \end{align*}
    
\end{proof}

\begin{Lemma}\label{lem:Girsanov}
    
Assume the function $b$ is of the  form $b(t, x, m) = \sigma(t, x)\phi(t, x, m) +  b^0(t, x)$, such that
\begin{align}\label{assump:phi}
    &\|\sigma(t, x)\|_S \leq K, \quad \|\phi(t, x, m)\|_\infty \leq K, \quad \|b^0(t,x) - b^0(t, x')\|^2 \leq L\|x - x'\|^2, \quad \|b^0(t, 0)\|^2 \leq K, \\
    & \|\phi(t,x, m) - \phi(t, x', m')\|^2 \leq L\left[\|x - x'\|^2 + \|m-m'\|^2\right], 
\end{align}
for all $t \in [0,T]$, 
where $\|\cdot\|_S$ and $\|\cdot \|_\infty$ denote the spectral norm and infinity norm, respectively. 

Let $\bar m^1, \bar m^2 \in \mathcal{M}_1$ be two functions of the form $\bar m^i (t,x, S^M(\hat W^0_{[0,t]}))$, and $\bar X^i$ solve the associated SDE:
\begin{equation}
    \ud \bar X_t^i = b(t, \bar X_t^i, \bar m^i(t, \bar X_t^i, \mathcal{S}^M(\hat W^0_{[0,t]}))) \ud t + \sigma(t, \bar X_t^i) \ud W_t + \sigma^0(t, \bar X_t^i) \ud W_t^0.
\end{equation}
Then for any functions $(x, W^0) \mapsto m(x, W^0)$ and $(x, W^0) \mapsto m'(x, W^0)$ that are at most linearly growing in $x$, and any $\eps \in (0,1)$, there exists a constant $C(\eps)$ such that
\begin{equation}
    \EE \|m(\bar X_t^1, W^0) - m'(\bar X_t^1, W^0)\|^2 \leq C(\eps) \left[\EE\|m(\bar X_t^2, W^0) - m'(\bar X_t^2, W^0)\|^2\right]^\eps, \quad \forall t \in [0,T].
\end{equation}
\end{Lemma}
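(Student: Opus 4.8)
The plan is to run a Girsanov change of measure that exploits the structural assumption $b(t,x,m) = \sigma(t,x)\phi(t,x,m) + b^0(t,x)$. The key observation is that $\bar X^1$ and $\bar X^2$ obey SDEs with identical diffusion coefficients $\sigma,\sigma^0$ and identical drift $b^0$, differing only in the embedding argument $\bar m^1$ versus $\bar m^2$ fed into $\phi$, and this difference enters the drift precisely through a left multiplication by $\sigma$. Writing $\phi^i_s = \phi\big(s, \bar X^2_s, \bar m^i(s, \bar X^2_s, \mathcal{S}^M(\hat W^0_{[0,s]}))\big)$ and setting $\theta_s = \phi^1_s - \phi^2_s$, the substitution $\ud\tilde W_s = \ud W_s - \theta_s\,\ud s$ turns the $\bar X^2$-equation into
\begin{equation*}
\ud\bar X^2_t = \big[\sigma(t,\bar X^2_t)\phi^1_t + b^0(t,\bar X^2_t)\big]\ud t + \sigma(t,\bar X^2_t)\,\ud\tilde W_t + \sigma^0(t,\bar X^2_t)\,\ud W^0_t,
\end{equation*}
since $\sigma\phi^2 + \sigma\theta = \sigma\phi^1$. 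Crucially, I never need to invert $\sigma$: the structural form makes the drift perturbation automatically of the form $\sigma\times(\text{bounded})$, so the Girsanov shift is simply $\theta = \phi^1-\phi^2$, bounded by $2K$ thanks to $\|\phi\|_\infty \le K$.

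Next I would set $Z := \mathcal{E}\big(\int_0^\cdot \theta_s\,\ud W_s\big)_T$ and $\ud\QQ = Z\,\ud\PP$. Boundedness of $\theta$ gives Novikov's condition, so $Z$ is a true martingale and, by Girsanov, $\tilde W$ is a $\QQ$-Brownian motion. Because the change of measure shifts only the $W$-component and $W^0\perp W$ under $\PP$, the bracket $\langle W^0,\int\theta\,\ud W\rangle$ vanishes, so $W^0$ remains a $\QQ$-Brownian motion independent of $\tilde W$; in particular the signature functionals $\bar m^1(\cdot,\mathcal{S}^M(\hat W^0))$ keep their form. Under $\QQ$, $\bar X^2$ therefore solves exactly the SDE defining $\bar X^1$ under $\PP$, driven by $(\tilde W, W^0)$ from the same initial condition, so uniqueness in law yields $\mathrm{Law}_\QQ(\bar X^2, W^0) = \mathrm{Law}_\PP(\bar X^1, W^0)$. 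Writing $F(x,W^0) = \|m(x,W^0)-m'(x,W^0)\|^2$, this gives
\begin{equation*}
\EE_\PP\big[F(\bar X^1_t, W^0)\big] = \EE_\QQ\big[F(\bar X^2_t, W^0)\big] = \EE_\PP\big[Z\,F(\bar X^2_t, W^0)\big].
\end{equation*}

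Finally I would extract the exponent $\eps$ by a single Hölder inequality with conjugate exponents $\tfrac{1}{1-\eps}$ and $\tfrac{1}{\eps}$ applied to $Z F^{1-\eps}$ and $F^\eps$, obtaining
\begin{equation*}
\EE_\PP[Z F] \le \big(\EE_\PP[Z^{1/(1-\eps)} F]\big)^{1-\eps}\big(\EE_\PP[F]\big)^{\eps}.
\end{equation*}
The first factor is bounded by a constant $C(\eps)$ independent of $\bar m^1,\bar m^2$ via Cauchy--Schwarz together with (i) finiteness of all moments of $Z$ (from boundedness of $\theta$), and (ii) finiteness of the second moment of $F(\bar X^2_t,W^0)\le C(1+\|\bar X^2_t\|^2)$, which follows from the linear growth of $m,m'$ and the moment bounds for $\bar X^2$ (bounded diffusion and linearly growing drift). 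This yields $\EE_\PP[F(\bar X^1_t,W^0)]\le C(\eps)\big(\EE_\PP[F(\bar X^2_t,W^0)]\big)^\eps$, as claimed. I expect the main obstacle to be the rigorous identification of laws in the second step: one must carefully argue that $W^0$ is preserved as a Brownian motion under $\QQ$ (so that both the drift functional $\bar m^1$ and the test functional $F$ transfer faithfully), that the two SDEs share the same initial condition, and that uniqueness in law applies despite the path-dependence through the signature of $W^0$; the Hölder bookkeeping is then routine once the moment bounds are in hand.
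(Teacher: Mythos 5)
Your proof is correct and follows essentially the same route as the paper's: a Girsanov change of measure whose shift $\phi^1-\phi^2$ is bounded thanks to the structure $b=\sigma\phi+b^0$ (so $\sigma$ never needs to be inverted), Novikov's condition, preservation of $W^0$ as a Brownian motion under the new measure, identification of the joint law of $(\bar X^i, W^0)$ by uniqueness, and then H\"older plus moment bounds from the linear growth of $m,m'$ to extract the exponent $\eps$. The only cosmetic differences are the direction of the measure change --- you turn $\bar X^2$ into (the law of) $\bar X^1$ and bound positive moments of the density $Z$ under $\PP$, whereas the paper turns $\bar X^1$ into $\bar X^2$ and bounds the inverse moments $\EE_\QQ[\ZZ^{-\gamma}]$ --- and the H\"older bookkeeping (your $\eps\in(0,1)$ appears directly, while the paper obtains $\eps = 1-\tfrac{2}{\gamma}$, covering the same range).
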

\begin{proof}

The proof follows the argument in \cite[Theorem 4]{han2022convergence}. For completeness, we provide the details below.

The well-posedness of $\bar X^i$ has been established in Lemma~\ref{lem:bdd-fbsde-sol}.
Let  $\delta \phi_t := \phi(t, \bar X_t^1, \bar m^1(t, \bar X_t^1, \mathcal{S}^M(W_{[0,t]}^0))) -  \phi(t, \bar X_t^1, \bar m^2(t, \bar X_t^1, \mathcal{S}^M(W_{[0,t]}^0)))$ and define the Radon-Nikodym derivative
$$
    \frac{\ud\QQ}{\ud \PP} \equiv \ZZ := \exp\bigg\{-\int_0^T \delta \phi_t \ud W_t - \half \int_0^T |\delta \phi_t|^2\ud t \bigg\}.
$$
By the boundedness of $\phi$, the Novikov condition is satisfied, so $\QQ \sim \PP$. Under $\QQ$, the process $(W_\cdot^0, W_\cdot^\QQ :=  W_\cdot + \int_0^\cdot \delta\phi_s \ud s)$ is a standard Brownian motion, and the distribution of $\bar X^1$ coincides with that of $\bar X^2$ under $\PP$.

Let $\EE_\QQ$ denote the expectation under $\QQ$, and $\EE^p_\QQ$ denotes $(\EE_\QQ[\cdot])^p$. For $\gamma > 2$, we compute:
\begin{align}
    \EE_\QQ[\ZZ^{-\gamma}] & = \EE_\QQ\bigg[\exp\bigg\{\gamma\int_0^T \delta \phi_t \ud W_t^\QQ - \frac{\gamma}{2} \int_0^T |\delta \phi_t|^2\ud t \bigg\}\bigg]\\
    & \leq   \EE_\QQ^{1/2} \bigg[\exp\bigg\{2\gamma\int_0^T \delta \phi_t \ud W_t^\QQ - 2\gamma^2 \int_0^T |\delta \phi_t|^2\ud t \bigg\}\bigg] \times \EE_\QQ^{1/2}\bigg[\exp\bigg\{ (2\gamma^2-\gamma) \int_0^T |\delta \phi_t|^2\ud t \bigg\}\bigg] \\
    & \leq e^{CT(\gamma^2 -\half \gamma)},
\end{align}
where we used the Cauchy–Schwarz inequality, the martingale property, and the boundedness of $\phi$.

Consequently,
\begin{align}
    \EE_\PP &\|m(\bar X_t^1, W^0) - m'(\bar X_t^1, W^0)\|^2 = \EE_\QQ \left[\|m(\bar X_t^1, W^0) - m'(\bar X_t^1, W^0)\|^2 \ZZ^{-1}\right]\\
   & \leq \EE_\QQ^{1-\frac{1}{\gamma}} \left[\|m(\bar X_t^1, W^0) - m'(\bar X_t^1, W^0)\|^{\frac{2\gamma}{\gamma-1}}\right]\EE_\QQ^{\frac{1}{\gamma}}\left[\ZZ^{-\gamma}\right] \\
   & \leq C(\gamma) \EE_\QQ^{1-\frac{2}{\gamma}} \left[\|m(\bar X_t^1, W^0) - m'(\bar X_t^1, W^0)\|^2\right] \EE_\QQ^{\frac{1}{\gamma}} \left[\|m(\bar X_t^1, W^0) - m'(\bar X_t^1, W^0)\|^4\right] \\
   & \leq C(\gamma) \EE_\PP^{1-\frac{2}{\gamma}} \left[\|m(\bar X_t^2, W^0) - m'(\bar X_t^2, W^0)\|^2\right],
\end{align}
where we consecutively used H\"{o}lder's inequality, the bound on $\EE_\QQ[\mc{Z}^{-\gamma}]$, H\"{o}lder's inequality again, and the linear growth condition of $m, m'$, and \cite[Theorem 3.4.3]{zhang2017backwardbook}. 
Here $C(\gamma)$ is a constant depending on $T$, $L$, $K$ and $\gamma$, and may vary from line to line. Noting that  $0 < 1-\frac{2}{\gamma} < 1$, the proof is complete.
\end{proof}

\begin{Theorem}

    Let $(X,Y,Z,Z^0)$ be the solution to the MKV FBSDE~\eqref{def:MV-FBSDE} and $(X^k,Y^k,Z^k,Z^{0,k})$ be the solution to~\eqref{def:MV-FBSDE-k}. Then, there exist constants $\eps \in (0,1)$, $C>0$ and $0<q<1$ depending on the data of the problem and $\eps$ such that:
\begin{align*}
    &\sup_{0 \le t \le T} \left[\EE\|X_t - X^k_t\|^2 + \EE\|Y_t - Y^k_t\|^2 \right] + \int_0^T \left[\EE\|Z_t - Z^k_t\|^2 + \EE\|Z^{0}_t - Z^{0,k}_t\|^2 \right]\ud t
    \\
    \le 
    & \, C(\eps) \Big\{q^k + \sum_{j=0}^{k-1}q^{k-j}  \int_0^T \EE \|m_1^{j+1}(t, X^{j}_t, \mc{S}^M(\hat W^0_{[0,t]})) - m_1(t, X^{j}_t, \MCL(X_t^j \vert \mc{F}_t^0))\|^2 \ud t 
    \\
    & \qquad\qquad\qquad\qquad + \int_0^T \EE \|m_4^{k}(t, X^{k-1}_t, \mc{S}^M(\hat W^0_{[0,t]})) - m_4(t, X^{k-1}_t, \MCL(X_t^{k-1} \vert \mc{F}_t^0))\|^2 \ud t
    \\
    & \qquad\qquad\qquad\qquad + \EE \|m_5^{k}(X^{k-1}_T, \mc{S}^M(\hat W^0_{[0,T]})) - m_5(X^{k-1}_T, \MCL(X_T^{k-1} \vert \mc{F}_T^0))\|^2\Big\}^\eps.
\end{align*}
\end{Theorem}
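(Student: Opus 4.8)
The plan is to bootstrap directly from Theorem~\ref{thm:main-convergence}, whose right-hand side already controls the left-hand side by $C\big\{q^k + \sum_{j=0}^{k-1} q^{k-j} I_1^{j+1} + I_4^k + I_5^k\big\}$, where each supervised-learning discrepancy is measured along the \emph{current} forward iterate ($I_1^{j+1}$ at $X^{j+1}$, and $I_4^k, I_5^k$ at $X^k, X^k_T$). The only genuine task is to re-anchor these discrepancies at the \emph{previous} iterate ($X^j$, $X^{k-1}$, $X^{k-1}_T$), which is exactly what appears in the target statement, and to absorb the resulting loss into a single exponent $\eps\in(0,1)$. The workhorse is Lemma~\ref{lem:Girsanov}, which trades a change of anchor point for an $\eps$-power.

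First I would apply Lemma~\ref{lem:Girsanov} term by term, under its structural hypothesis $b=\sigma\phi+b^0$ with $\phi$ bounded. For the forward discrepancy at index $j$, take $\bar X^1=X^{j+1}$ (driven by $\bar m^1=m_1^{j+1}$) and $\bar X^2=X^j$ (driven by $\bar m^2=m_1^{j}$), both lying in $\mathcal{M}_1$ and both solving~\eqref{def:MV-FBSDE-k}, and set $m(x,W^0)=m_1(t,x,\MCL(X_t^j\vert\cF_t^0))$ and $m'(x,W^0)=m_1^{j+1}(t,x,\mc{S}^M(\hat W^0_{[0,t]}))$, which are at most linearly growing in $x$ by Assumption~\ref{assumption bsde} and membership in $\mathcal{M}_1$. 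The lemma then yields, pointwise in $t$, $\EE\|(m-m')(X^{j+1}_t,W^0)\|^2\le C(\eps)\big[\EE\|(m-m')(X^{j}_t,W^0)\|^2\big]^\eps$, i.e.\ the integrand of $I_1^{j+1}$ is dominated by the $\eps$-power of the integrand of the target term evaluated at $X^j$. The identical construction with $(\bar X^1,\bar X^2)=(X^k,X^{k-1})$ handles $I_4^k$ (via $m_4$) and, at the terminal time, $I_5^k$ (via $m_5$, with $\mathcal{M}_2$ in place of $\mathcal{M}_1$).

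Next I would convert these pointwise $\eps$-bounds into the claimed form in two Jensen steps exploiting the concavity of $r\mapsto r^\eps$. Integrating the $m_1$ and $m_4$ estimates over $[0,T]$ and applying Jensen gives $\int_0^T f_t^\eps\,\d t\le T^{1-\eps}\big(\int_0^T f_t\,\d t\big)^\eps$, so that $I_1^{j+1}\le C(\eps)[\tilde I_1^{j+1}]^\eps$, $I_4^k\le C(\eps)[\tilde I_4^k]^\eps$, $I_5^k\le C(\eps)[\tilde I_5^k]^\eps$, where the tilde quantities are precisely the target discrepancies anchored at the previous iterate. Writing $S:=q^k+\sum_{j=0}^{k-1}q^{k-j}\tilde I_1^{j+1}+\tilde I_4^k+\tilde I_5^k$ for the target sum, the geometric weights have bounded total mass $\sum_{j}q^{k-j}\le q/(1-q)$, so a second application of Jensen gives $\sum_j q^{k-j}[\tilde I_1^{j+1}]^\eps\le (q/(1-q))^{1-\eps}\big(\sum_j q^{k-j}\tilde I_1^{j+1}\big)^\eps\le C(\eps)S^\eps$; the terms $[\tilde I_4^k]^\eps,[\tilde I_5^k]^\eps$ are $\le S^\eps$ by monotonicity, and the residual $q^k$ is absorbed through $q^k\le q^{k\eps}=(q^k)^\eps\le S^\eps$. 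Collecting everything yields the left-hand side $\le C(\eps)S^\eps$, as claimed, with the same $q$ as in Theorem~\ref{thm:main-convergence}.

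The two delicate points are the following. The structural hypothesis of Lemma~\ref{lem:Girsanov} (bounded $\phi$, hence Novikov, with the change of measure acting only on $W$ while leaving $W^0$, the common-noise signature, and the conditional laws untouched) must be in force; this is what makes $(X^{j+1},W^0)$ under $\QQ$ equidistributed with $(X^j,W^0)$ under $\PP$ and legitimizes the re-anchoring. The second is the second Jensen step: one might fear the wrong-direction inequality $\sum_j w_j a_j^\eps\ge(\sum_j w_j a_j)^\eps$, but boundedness of the total weight $\sum_j w_j$ rescues the estimate in the direction we need. I expect the main obstacle to be not any deep analytic difficulty but the careful bookkeeping of matching each state/measure pair to the correct quadruple $(\bar X^1,\bar X^2,m,m')$ and verifying the linear-growth inputs of Lemma~\ref{lem:Girsanov} at every step.
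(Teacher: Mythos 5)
Your proposal is correct and follows essentially the same route as the paper: the paper's own (very terse) proof likewise invokes Lemma~\ref{lem:Girsanov} with $(\bar X^1,\bar X^2)=(X^{j+1},X^j)$ (resp.\ $(X^k,X^{k-1})$) to re-anchor the supervised-learning discrepancies appearing in Theorem~\ref{thm:main-convergence} at the previous iterate, and your identification of $(m,m')$ with $\big(m_1(t,\cdot,\MCL(X^j_t\vert\cF^0_t)),\,m_1^{j+1}(t,\cdot,\mc{S}^M(\hat W^0_{[0,t]}))\big)$ matches the theorem's statement (the paper's text writes $m^j_i$ where it evidently means $m^{j+1}_i$). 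The Jensen and weighted-Jensen steps you spell out to pull the $\eps$-power outside the time integral and the geometric sum are left implicit in the paper's one-line ``combined with Theorem~\ref{thm:main-convergence}'' conclusion, so your write-up is a faithful, more detailed rendering of the same argument.
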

\begin{proof}

    Since both $m_i$ and $m^j_i$ are Lipschitz in $x$, one can apply Lemma~\ref{lem:Girsanov}, identifying $m^{j+1}_i$ and $m^j_i$ with $\bar m^1$ and $\bar m^2$, and $m^j_i$ and $m_i$ as $m$ and $m'$ as defined in the lemma. Combined with Theorem~\ref{thm:main-convergence}, this yields the desired result.
\end{proof}

\section{Numerical Experiments}\label{sec:numerics}
This section is dedicated to evaluating the numerical performance of our proposed algorithm. The implementation is carried out in Python using \texttt{Pytorch} and \texttt{signatory}, and the code is available upon request. Recall that $W_t$ and $W_t^0$ are independent standard $q$-dimensional Brownian motions, and $X_t$ is the $d$-dimensional forward process.

Throughout this section, we set the terminal time to $T = 1$, and discretize the time interval $[0,1]$ into $N_T= 120$ equal subintervals. We employ Adam optimizer for training. To capture the path signature of the common noise $\{W_{[0, t]}^0\}$ with higher accuracy, we sample it with a finer grid with $4N_T$ subintervals. 

To evaluate numerical performance, we define mean absolute error (MAE) at time $t_n$ as: 
\begin{equation}
\text{MAE}_{t_n} = \underset{j}{\text{mean}} \big| m \big(t_n, X_{t_n}^{j},\mc{L} ( X_{t_n}^j\vert \mc{F}_{t_n}^0)(\omega^j) \big)  - \cm \big(t_n, X_{t_n}^j, \mathcal{S}^M(\hat W^{0,j}_{[0,t_n]}) \big) \big|,
\end{equation} 
where the superscript $j \in \{1, \ldots, 10^3\}$ indexes sample trajectory, and $\omega^j$ is the $j^{th}$ realization of the common noise $W^0$. We also define the mean Euclidean error (MEE) for processes as 
\begin{equation}
    \text{MEE}(\theta, \hat{\theta}) = \underset{j,\,t_n}{\text{mean}} \| \theta_{t_n}^{j} - \hat{\theta}_{t_n}^{j} \|,
\end{equation}
where $\theta_{t_n}$ may represent the process $X, Y, Z, Z_0$ or distribution embedding functions $m_i$ valued at time $t_n$, $\hat \theta_{t_n}$ denotes their approximations, and $\| \cdot \|$ is the Euclidean norm. For both MAE and MEE values, we report the mean and standard deviation over five independent training runs.

\subsection{Supervised learning for $m$}\label{sec:numerics1}
Our first example aims to illustrate the efficiency of the supervised learning approach for approximating $m_i$, as proposed in \textbf{Step 2} (see Section~\ref{sec:detailed-algo}). Let $X_t = W_t + W_t^0$, and consider the function
\begin{equation}
    m(t, x, \mc{L}(X_t \vert \mc{F}_t^0)) := \EE_{x_t'\sim \mc{L}(X_t \vert \mc{F}_t^0)}\left[e^{-\|x - x_t'\|^2/q}\right] = \left(\frac{q}{q + 2t}\right)^{q/2} e^{-\frac{\left\Vert x - W_t^0\right\Vert^2}{q+2t}}.
\end{equation}
For each epoch, we sample $N_2 = 2^6$ common noise paths and $N_1 = 2^7$ idiosyncratic noise paths per each common one, resulting  in $N_1N_2$ idiosyncratic paths in total. We perform 300 training epochs, each consisting of 50 stochastic gradient descent (SGD) steps. The initial learning rate is $lr = 0.003$, and is reduced by a factor of 0.95 every 100 SGD steps during the first 100 epochs. Thereafter, $lr$ is kept constant for the remaining epochs.

Our first experiment compares two classes of functions used to approximate $m_i$. \textbf{Archi. 1} searches $\cm$ among linear functionals of the truncated signature, with the functional parameterized by a neural network (NN) taking inputs $(t_n,X_{t_n})$. \textbf{Archi. 2} employs a feedforward NN with inputs $(t_n, X_{t_n}, \mathcal{S}^M(\hat W_{[0, t]}^0))$, where $\mathcal{S}^M(\hat W_{[0, t]}^0)$ is the time-extended signature truncated at order $M$. We evaluate MAE across dimensions $d = q = 1, 5, 10$ and hidden layers $R = 2, 3$, with truncation order fixed at $M = 3$. Results in Figure~\ref{fig: validate 1} show little difference between networks with $R = 2$ or $R=3$, but \textbf{Archi. 2} consistently outperforms \textbf{Archi. 1} under the same hyperparameters. Hence, we adopt \textbf{Archi. 2} for approximating $\cm$ in subsequent experiments.

\begin{figure}[!htbp]
    \centering
    \includegraphics[width=1.0\textwidth]{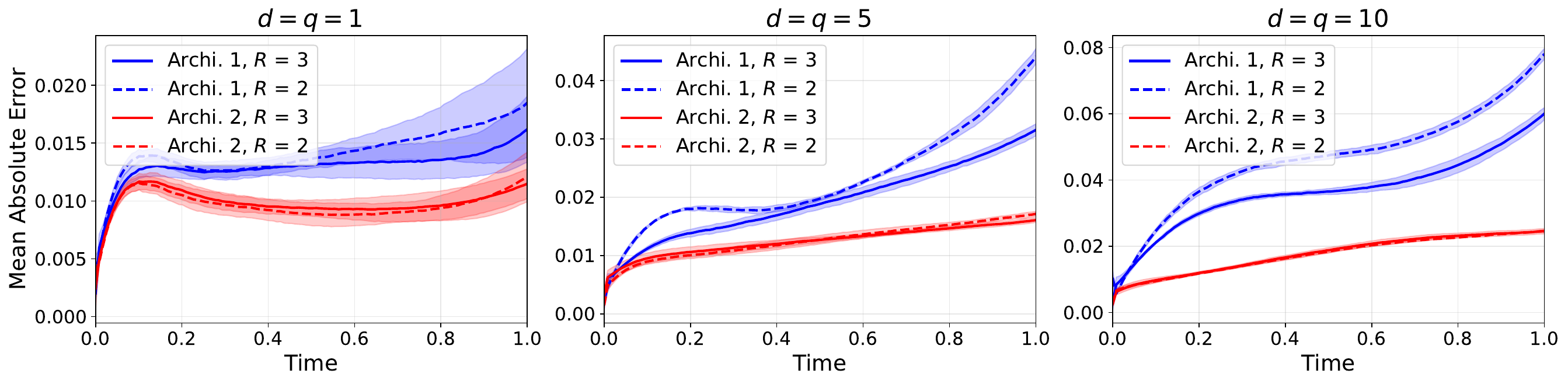} 
    \caption{Mean absolute error (MAE) over time for \textbf{Archi. 1} (linear functional of the truncated signature) and \textbf{Archi. 2} (feedforward neural network with truncated signature input), in dimensions 1, 5, and 10. Solid and dashed lines denote the average MAE across 5 independent runs; shaded areas show $\pm$ standard deviation.}
    \label{fig: validate 1}
\end{figure}

We next examine the effect of the signature truncation order $M$, evaluating MAE for $M = 2, 3, 4$ and dimensions $d = p = 1, 5, 10$, with results shown in Figure~\ref{fig: validate 2}. Performance worsens as $M$ increases, with $M =2$ achieving the lowest error across all cases.  A likely explanation is the tradeoff: higher-order terms add variance and risk overfitting, while low-order signatures already capture the key structure of Brownian paths. Thus, $M =2$ or 3 provides the best balance of accuracy and robustness for our setting.

\begin{figure}[htbp]
    \centering
    \includegraphics[width=1.0\textwidth]{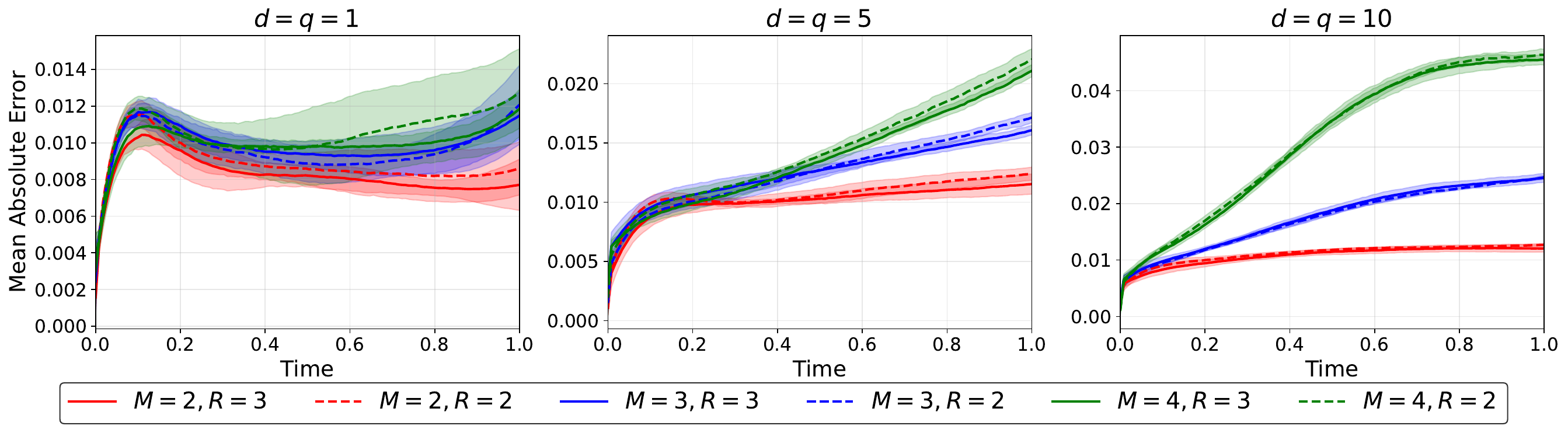} 
    \caption{Mean absolute error (MAE) over time for signature truncation orders $M=2,3,4$ in dimensions $d=p = 1, 5, 10$. Solid and dashed lines represent the average MAE across 5 independent runs; shaded areas indicate $\pm$ one standard deviation. Lower truncation orders yield better performance, with $M=2$ achieving the lowest error.}\label{fig: validate 2}
\end{figure}

To improve scalability in high-dimensional settings, we examine the performance of \textbf{log-signature} transforms \cite{kidger2021signatory, liao2019learning} (cf. Definition \ref{def: log-sig}). Table~\ref{tab:results of supervised learning} reports $\text{MAE}$ for dimensions $d = p = 5, 10, 12, 15$ using \textbf{Archi. 2} with both signature and log-signature representations at truncation orders  $M = 2$ and $M = 3$. Comparing the two truncation levels, we find that log-signatures are less sensitive to the truncation order in terms of MAE, independent of network architecture, with the advantage becoming more pronounced as dimension increases. This highlights the efficiency of the log-signature’s compressed, non-redundant representation, with clear advantages in higher-order and higher-dimensional settings where standard signatures become less efficient.

\begin{table}[htbp]
  \centering
  \begin{tabular}{|c|c|c|c|c|}
    \hline
    \multirow{2}{*}{\textbf{Configuration}} & \multicolumn{2}{c|}{\textbf{$M=2$}} & \multicolumn{2}{c|}{\textbf{$M=3$}} \\
    \cline{2-5}
     & \textbf{log-sig} & \textbf{sig} & \textbf{log-sig} & \textbf{sig} \\
    \hline
    $d = 5, H = 2$ & 9.58e-3 (6.84e-4) & 1.05e-2 (3.76e-4) & 1.03e-2 (3.74e-4) & 1.25e-2 (4.45e-4)\\
    \hline
    $d = 5, H = 3$ & 9.04e-3 (3.10e-4) & 1.00e-2 (3.39e-4) & 1.02e-2 (2.34e-4) & 1.25e-2 (3.70e-4)\\
    \hline 
    $d = 10, H = 2$ & 9.61e-3 (4.07e-4) & 1.11e-2 (2.25e-4) & 1.15e-2 (1.70e-4) & 1.74e-2 (2.22e-4)\\
    \hline 
    $d = 10, H = 3$ & 9.56e-3 (3.13e-4) & 1.07e-2 (3.57e-4) & 1.14e-2 (3.36e-4) & 1.77e-2 (4.76e-4)\\
    \hline
    $d=12$, $H=2$ & 9.80e-3 (2.67e-4) & 1.16e-2 (1.67e-4) & 1.22e-2 (1.25e-4) & 2.33e-2 (5.70e-4) \\
    \hline
    $d=12$, $H=3$ & 9.62e-3 (1.57e-4) & 1.12e-2 (3.53e-4) & 1.23e-2 (3.58e-4) & 2.47e-2 (9.69e-4) \\
    \hline
    $d=15$, $H=2$ & 9.83e-3 (2.29e-4) & 1.23e-2 (4.81e-4) & 1.62e-2 (4.79e-4) & 3.73e-2 (1.92e-3) \\
    \hline
    $d=15$, $H=3$ & 9.76e-3 (2.38e-4) & 1.19e-2 (3.05e-4) & 1.59e-2 (6.14e-4) & 3.49e-2 (8.11e-4) \\
    \hline
  \end{tabular}
  \caption{Time-averaged performance measured by MAE of \(M\) for dimensions \(d=5, 10, 12, 15\). Results are aggregated over $10^3$ independent trajectories. Each entry reports the mean MAE with standard deviation (in parentheses), computed from 5 repeated experiments with different random seeds.}
  \label{tab:results of supervised learning}
  \end{table}

Across all experiments, we find that network architecture has little impact on MAE. Error growth with respect to dimension is moderate overall, though more pronounced for \textbf{Archi. 1} and for higher-order signatures ($M = 3, 4$). These results confirm the effectiveness of neural networks for approximating $m$, particularly in high-dimensional settings.

\subsection{A MV-FBSDE in random environment}
Next, we consider the following MV-FBSDE in random environment for $(X_t, Y_t, Z_t, Z_t^0)$:
\begin{equation}
    \begin{dcases}
        \ud X_t^i &= \bigg[\sin\Big(\EE_{x_t' \sim \mc{L}(X_t \vert \mc{F}_t^0)} e^{- \frac{\|X_t - x_t'\|^2}{d}} - e^{-\frac{\|X_t - \EE[X_t \vert \mc{F}_t^0]\|^2}{d+2t}}\big(\frac{d}{d+2t}\big)^{\frac{d}{2}}\Big) \\
        & \qquad + \frac{1}{2} \Big(\EE[Y_t \vert \mc{F}_t^0] - \sin \big(t + \frac{1}{\sqrt d} \sum_{i=1}^d\EE[X_t^i \vert \mc{F}_t^0]\big)e^{-\frac{t}{2}}\Big) \bigg]\ud t + \ud W_t^i + \ud W_t^{0,i}, \quad 1 \leq i \leq d\\
     \ud Y_t & = \bigg[ \frac{\sum_{i=1}^d (Z_t^i + Z_t^{0, i})}{2 \sqrt d}- Y_t +  \sqrt{2Y_t^2 + \|Z_t\|^2 + \|Z_t^0\|^2 + 1} - \sqrt 3\bigg] \ud t + Z_t \cdot  \ud W_t + Z_t^0 \cdot \ud W_t^0, 
    \end{dcases}
\end{equation}
with initial and terminal conditions $X_0^i = 0$ and 
$Y_T = \sin\big(T + \frac{\sum_{i=1}^d X_T^i}{\sqrt d}\big)$, where $X_t^i$ is the $i^{th}$ entry of the $d$-dimensional process $X_t$. One can check that the solution to the above MV-FBSDE is 
\begin{equation}
    X_t = W_t + W_t^0, \quad Y_t = \sin\Big(t + \frac{\sum_{i=1}^d X_t^i}{\sqrt d}\Big), \quad Z_t^i = Z_t^{0,i} = \frac{1}{\sqrt d} \cos \Big(t + \frac{\sum_{i=1}^d X_t^i}{\sqrt d }\Big).
\end{equation}
The corresponding $m$ functions are
$m_2 \equiv m_3 \equiv m_4 \equiv m_5 \equiv 0$, and 

\begin{equation}
    m_1 = \Big( \tilde \EE \big[ e^{- \frac{\|x - \tilde{x}_t \|^2}{d}}\big], \tilde \EE[\tilde{x}_t], \tilde \EE[\tilde{y}_t]  \Big) \in \mathbb{R}^{1 + d + 1},
\end{equation}
where the expected value $\tilde{\mathbb{E}}$ is with respect to $(\tilde{x}_t, \tilde{y}_t) \sim \mc{L}(X_t, Y_t \vert \mc{F}_t^0)$.

Building on the promising results from the supervised learning of \(m\) in Section \ref{sec:numerics1}, we now evaluate MEE under the following configurations: dimensions \(d = q = 1, 5, 10\), truncation order fixed at \(M = 2\), log-signature as feature representation, and \textbf{Archi. 2}. For hyperparameters, we use \(N_2 = 128\) common noise paths and \(N_1 = 256\)  idiosyncratic noise paths per common noise. The function $m_1$ is approximated by a feedforward NN with $R = 2$ hidden layers of width 64, while $Z_t$ and $Z_t^0$ are approximated by networks with $R = 4$ hidden layers of width 128. 
In the $k$-th fictitious play, the supervised learning of $m_1$ in \textbf{Step 2} is trained for 5000 SGD iterations with initial  $lr = 1.2 \times 10^{-4} \times 0.95^{k-1}$, decayed by a factor of $0.8$ every $500$ steps for stable convergence.
In \textbf{Step 3} (Deep BSDE training), 
$lr = 2.5\times 10^{-4}$ for the first 1000 SGD iterations, then multiplied by 0.3 for the next 500 iterations and again by 0.3 for the last 500 iterations during the first 20 fictitious plays. For subsequent player $(k > 20)$, $lr$ in Step 3 is further reduced by a factor of 0.9 per play. Paths of $(W, W^0)$ are regenerated every 20 iterations.

Results in Table~\ref{tab:results of MV-FBSDE} demonstrate consistent accuracy and convergence of our algorithm. The MEE for all processes $(X_t, Y_t, Z_t, Z_t^0)$ decreases as $k$ increases, with rapid improvement initially and slower gains after about $k=20$ fictitious plays. While the MEE grows and the convergence rate slightly declines with dimension, the overall stable convergence demonstrates the scalability of our framework. Figures~\ref{fig: MAE for dim1 over 5 independent runs.}--\ref{fig: MAE for dim1} present the MEE over time and comparison of between analytical and numerical trajectories for the case $d = q = 5$.

\begin{table}[htbp]
  \centering
  \begin{tabular}{|c|c|c|c|c|c|}
    \hline
    \textbf{$K$} & \textbf{Dim} & \textbf{$X$} & \textbf{$Y$} & \textbf{$Z$} & \textbf{$Z^0$} \\
    \hline
    \multirow{2}{*}{$k=5$}  
      & $d=1$ & 2.73e-2(5.03e-4) & 7.53e-2(2.46e-3) & 5.24e-2(1.01e-3) & 5.90e-2(3.46e-3) \\
      & $d=5$ & 8.21e-2(6.95e-3) & 1.02e-1(7.64e-3) & 6.61e-2(5.85e-3) & 8.38e-2(8.02e-3) \\
      & $d=10$ & 1.21e-1(1.36e-2) & 1.41e-1(8.69e-3) & 1.12e-1(1.57e-2) & 1.17e-1(1.98e-2) \\
      \hline
    \multirow{2}{*}{$k=10$} 
      & $d=1$ & 1.12e-2(7.15e-4) & 4.57e-2(1.04e-3) & 3.19e-2(2.73e-3) & 3.16e-2(5.99e-3) \\
      & $d=5$ & 3.68e-2(2.24e-3) & 5.84e-2(3.06e-3) & 3.59e-2(4.03e-3) & 4.36e-2(4.34e-3) \\
      & $d=10$ & 8.43e-2(1.17e-2) & 8.48e-2(7.08e-3) & 5.92e-2(5.41e-3) & 6.32e-2(1.28e-2) \\
    \hline
    \multirow{2}{*}{$k=20$} 
      & $d=1$ & 8.64e-3(7.81e-4) & 4.12e-2(1.28e-3) & 1.92e-2(3.62e-3) & 2.53e-2(4.41e-3) \\
      & $d=5$ & 2.86e-2(4.22e-3) & 5.03e-2(2.02e-3) & 3.08e-2(2.83e-3) & 3.47e-2(4.92e-3) \\
      & $d=10$ & 5.99e-2(5.76e-3) & 6.49e-2(6.15e-3) & 4.49e-2(2.55e-3) & 4.63e-2(5.68e-3) \\
    \hline
    \multirow{2}{*}{$k=30$} 
      & $d=1$ & 9.28e-3(6.61e-4) & 4.13e-2(1.78e-3) & 1.99e-2(1.79e-3) & 2.20e-2(2.32e-3) \\
      & $d=5$ & 2.43e-2(1.83e-3) & 4.71e-2(9.21e-4) & 2.19e-2(1.81e-3) & 3.08e-2(2.76e-3) \\
      & $d=10$ & 5.32e-2(3.58e-3) & 6.03e-2(1.55e-3) & 3.92e-2((3.53e-2) & 3.98e-2(4.14e-3) \\
      \hline
  \end{tabular}
  \caption{Mean Euclidean Errors (MEE) of the trajectories \( (X_t, Y_t, Z_t, Z_t^0) \) for $d=q=1, 5$ and $10$. Values show mean and standard deviation over 5 independent runs. Performance improves as Fictitious Play (FP) iterations increase. }
  \label{tab:results of MV-FBSDE}
\end{table}

\begin{figure}[!htbp]
    \centering
        \includegraphics[width=0.7\textwidth]{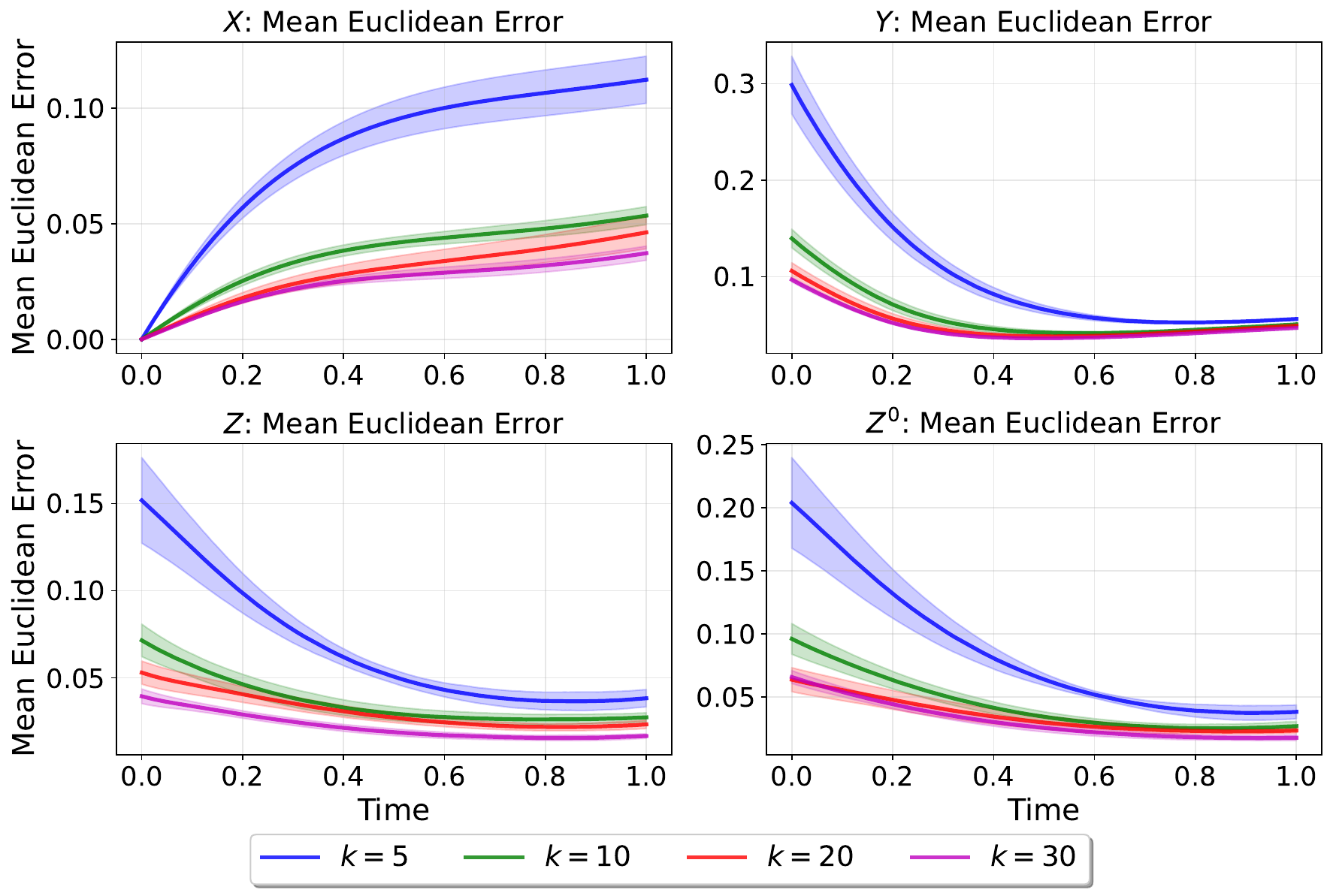}
    \caption{Mean Euclidean Error (MEE) over time for $d = p = 5$ with varying numbers of fictitious play iterations. Shaded regions represent $\pm 1$ standard deviation over five independent runs.}
    \label{fig: MAE for dim1 over 5 independent runs.}
\end{figure}

\begin{figure}[!htbp]
    \centering
    \includegraphics[width=0.7\textwidth]{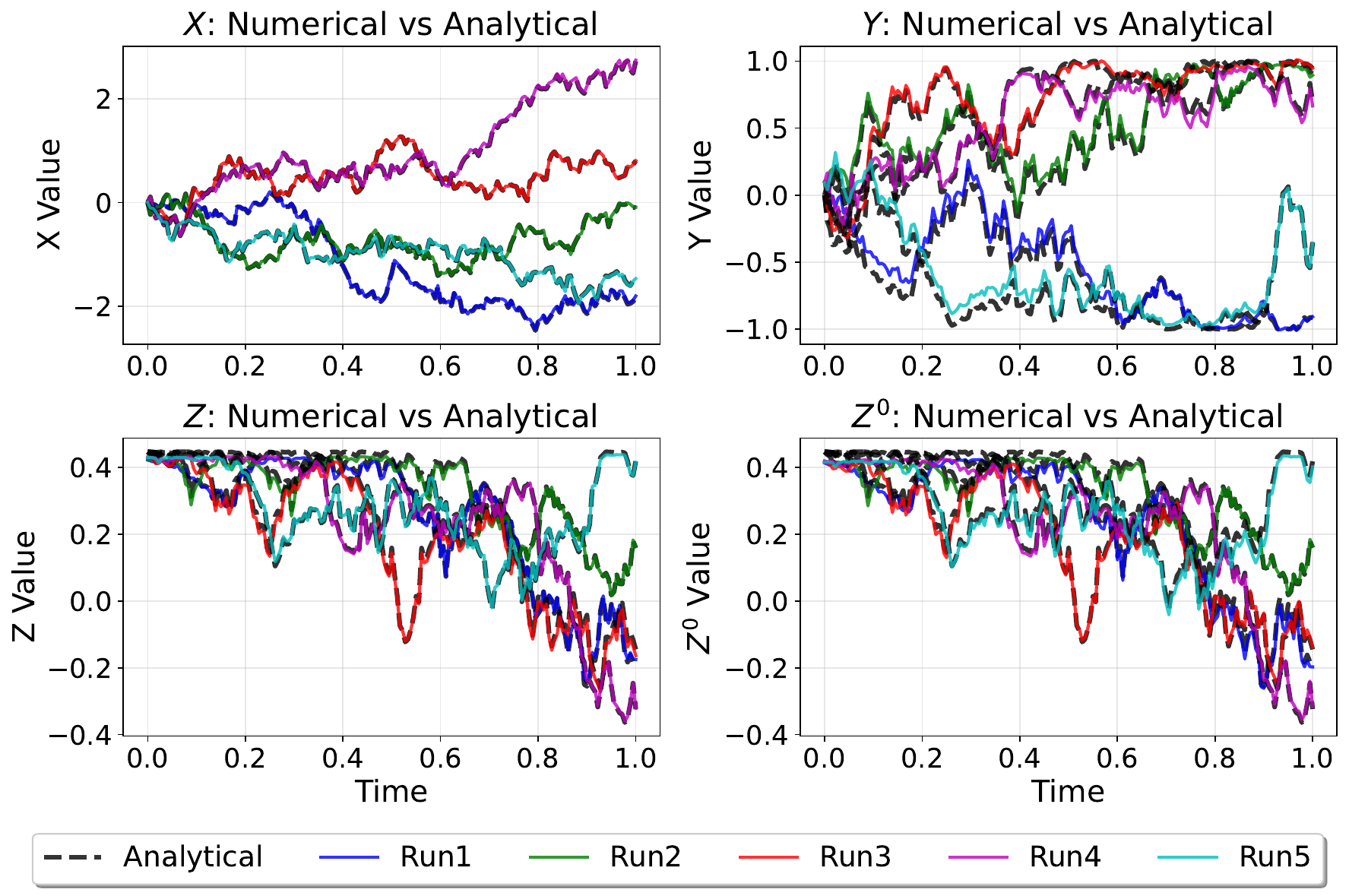} 
    \caption{Comparison of analytical solutions (dashed line) and numerical solutions (solid lines) for $d = q = 5$ obtained after 30 fictitious play rounds over five independent runs.}
    \label{fig: MAE for dim1}
\end{figure}

\subsection{A flocking model with common noise}

In this final example, inspired by a model of Cucker and Smale~\cite{cucker2007emergent}, we consider a flocking model in a mean-field game setting analogous to the one proposed in~\cite{nourian2010synthesis,hanhulong2022learning} but here with common noise. Flocking models have been widely used to describe collective motion in groups of self-propelled agents, such as birds, fish, bacteria, and insects. In our formulation, each agent chooses her acceleration to minimize a cost functional that penalizes both acceleration and misalignment in position and velocity.

The presence of common noise, interpretable as wind for birds or the sudden appearance of a predator for fish, introduces additional complexity, as the problem now depends on the full distribution of the population condition on the common noise. This is significantly more challenging than dependence on conditional moments and, to the best of our knowledge, has not been numerically addressed in the existing literature. We note that the version of the mean-field game without common noise, along with its numerical study, has been presented in \cite{hanhulong2022learning}.

We consider the dynamics of a representative agent whose position \( x_t \in \mathbb{R}^d \) and velocity \( v_t \in \mathbb{R}^d \) evolve according to the controlled stochastic system
\begin{equation}
    \begin{dcases}
      \ud x_t = v_t \, \ud t, \\
      \ud v_t = u_t \, \ud t + C \, \ud W_t + D \, \ud W_t^0,
    \end{dcases}
\end{equation}
where \( u_t \in \mathbb{R}^d \) is the control representing acceleration, \( W_t \) is a \( q \)-dimensional Brownian motion modeling idiosyncratic noise, and \( W_t^0 \) is a \( q \)-dimensional Brownian motion that introduces common noise shared across all agents.

The agent aims to minimize
\begin{equation}\label{eq:CSflock_cost_rewrite}
    \EE \int_0^T (\|u_t\|_R^2 + \mathcal{C}(x_t, v_t; f_t)) \, \ud t,
\end{equation}
where \( \|u_t\|_R^2 := u_t^\top R u_t \) penalizes large acceleration, and the second term, \( \mathcal{C}(x_t, v_t; f_t) \), measures the misalignment between the agent’s position and velocity from a given distribution \( f_t \):
\begin{equation}
    \mathcal{C}(x, v; f) = \left\| \int_{\mathbb{R}^{2n}} w(\|x - x'\|)(v' - v) \, f(x', v') \, \ud x' \ud v' \right\|_Q^2 = \left\| \tilde{\EE}_{(x',v') \sim f} \left[w(\|x - x'\|)(v' - v)\right] \right\|_Q^2.
\end{equation}
Here, the interaction weight function is defined as \( w(x) := (1 + x^2)^{-\beta} \) for some \( \beta \geq 0 \), and \( Q, R \) are symmetric positive definite matrices defining the respective norms \( \|x\|_Q := (x^\top Q x)^{1/2} \) and \( \|u\|_R := (u^\top R u)^{1/2} \). The expectation $\tilde \EE$ is with respect to the distribution $f$. 

To determine a mean-field equilibrium under common noise, one must find an optimal control strategy \( \hat{\mu}_t \) such that \( f_t \) corresponds to the conditional density of the optimal trajectory \( (\hat{x}_t, \hat{v}_t) \) given the common noise filtration, i.e., $f_t \sim \mc{L}(\hat{x}_t, \hat{v}_t \vert \mc{F}_t^0)$.

\subsubsection{The Reformulation through MV-FBSDEs}

Following \cite[Chapter 2]{CarmonaDelarue_book_II}, we characterize the mean-field equilibrium through MV-FBSDEs in random environment with state $X_t = (x_t,v_t)^\top$:
\begin{equation}
\label{eq:CSFlocking}
\begin{dcases}
    \ud x_t = v_t \ud t, \quad \ud v_t = -\frac{1}{2} R^{-1} Y_t^2 \ud t + C \ud W_t + D \ud W_t^0, \quad &(x_0, v_0) = X_0,  \\
   \ud Y_t = -\begin{pmatrix}\partial_x H\\ \partial_v H \end{pmatrix}(t, x_t, v_t, \mc{L}(x_t, v_t \vert \mc{F}_t^0), Y_t, \hat u_t) \ud t + Z_t \ud W_t + Z_t^0 \ud W_t^0, \quad &Y_T = 0,
    \end{dcases}
\end{equation}
where $Y_t \in \mathbb{R}^{2d}$ is the backward process with $Y_t^1 \in \mathbb{R}^d, Y_t^2 \in \mathbb{R}^d$ being the first half and second half entries, $Z_t = \begin{pmatrix}Z_t^1 \\ Z_t^2\end{pmatrix} \in \mathbb{R}^{2d\times q}$ is the adjoint process with $Z_t^1$ and $Z_t^2$ being $\mathbb{R}^{d\times q}$-valued, $Z_t^0 = \begin{pmatrix}Z_t^{0,1} \\ Z_t^{0,2}\end{pmatrix} \in \mathbb{R}^{2d\times q}$ is the adjoint process with $Z_t^{0,1}$ and $Z_t^{0,2}$ being $\mathbb{R}^{d\times q}$-valued. The Hamiltonian $H$ is defined by
\begin{equation}
    H(t, x, v, f, y, u) = (v^\top, u^\top) y +\mathcal{C}(x, v; f)  + \|u\|_R^2,
\end{equation} 
whose unique minimizer is 
\begin{equation}
    \hat u = -\frac{1}{2} R^{-1} y_{d+1:2d},
\end{equation}
where $y \in \mathbb{R}^{2d}$ with $y_{1:d}$ being the first half entries of this vector and $y_{d+1:2d}$ being the second half.

With straightforward computation, the derivatives of $H$ with respect to the state variables $(x, v)$ are:
\begin{align}
    \partial_x H & = \partial_x \mathcal{C}(x, v; f) = 2\tilde \EE_{(x',v') \sim f}[\partial_xw(\|x - x'\|)(v' - v)]^\top Q \tilde \EE_{(x',v')\sim f} [w(\|x - x'\|)(v' - v)], \\
    \partial_v H & = y_{1:n} + \partial_v \mathcal{C}(x, v; f) = y_{1:n} + 2Q \tilde \EE_{(x',v')\sim f} [w(\|x - x'\|)(v' - v)] \tilde \EE_{(x',v') \sim f}[-w(\|x - x'\|)],
\end{align}
where $\partial_xw(\|x - x'\|)(v' - v)$ is understood as a Jacobian matrix
$$
\begin{bmatrix}
 \frac{\partial w(\|x-x'\|)}{\partial x_1}(v_1' - v_1) & \cdots &  \frac{\partial w(\|x-x'\|)}{\partial x_n}(v_1' - v_1) \\
\vdots & \cdots & \vdots \\
 \frac{\partial w(\|x-x'\|)}{\partial x_1}(v_n' - v_n) & \cdots &  \frac{\partial w(\|x-x'\|)}{\partial x_n}(v_n' - v_n)
\end{bmatrix},
$$
and $x_i$ (resp. $v_i$) denotes the $i^{th}$ entry of $x$ (resp. $v$).

Therefore, the distribution dependence functions are identified as $m_1 \equiv = m_2 \equiv m_3 \equiv m_5 \equiv 0$, and 
\begin{align}
m_4(t, x_t, v_t, \mc{L}(x_t, v_t \vert \mc{F}_t^0)) = \left[
\begin{aligned}
&\tilde\EE[\partial_xw(\|x_t - x'_t\|)(v'_t - v_t)]^\top Q \tilde \EE [w(\|x_t - x'_t\|)(v'_t - v_t)]\\
&\tilde \EE [w(\|x_t - x'_t\|)(v'_t - v_t)] \tilde \EE[-w(\|x_t - x'_t\|)]
\end{aligned}
\right],
\end{align}
where $\tilde \EE$ denotes expectation with respect to  $(x_t', v_t')$ distributed according to $\mc{L}(x_t, v_t \vert \mc{F}_t^0)$.

By the stochastic maximum principle, once \eqref{eq:CSFlocking} is solved and the processes $(x_t, v_t,  Y_t,  Z_t, Z_t^0)$ are identified, the optimal control $\hat u_t = -\frac{1}{2} R^{-1} Y_t^2$, together with $\hat f_t = \mc{L}( x_t, v_t \vert \mc{F}_t^0)$ gives a mean-field game equilibrium.

\subsubsection{Analytical benchmark: A special LQ case when $\beta = 0$}

In general, the system~\eqref{eq:CSFlocking} does not admit an explicit analytical solution. However, a tractable case arises when $\beta = 0$ and $w(x) \equiv 1$, in which the problem reduces to a linear-quadratic (LQ) mean-field game with common noise. In this setting, the mean-field cost-coupling function simplifies to
\begin{equation}
    \mathcal{C}(x,v; f) = \left\Vert \tilde \EE_{(x',v')\sim f} [v' - v]\right\Vert_Q^2 = \left\Vert\EE[v \vert \mc{F}_t^0] - v\right\Vert_Q^2.
\end{equation}
The corresponding Hamiltonian derivatives become
\begin{equation}
    \partial_x H = 0, \quad \partial_v H = y_{1:d} + 2Q(v - \tilde \EE_{(x', v')\sim f}[v']) = y_{1:d} + 2Q(v - \EE[v\vert \mc{F}_t^0]).
\end{equation}

Now, the MV-FBSDEs in random environment derived from the stochastic maximum principle reduce to
\begin{equation}\label{eq:CSFlocking_special}
\begin{dcases}
    \ud x_t = v_t \ud t, \quad  \ud v_t = -\frac{1}{2} R^{-1} Y_t^2 \ud t + C \ud W_t + D \ud W_t^0, \quad &(x_0, v_0) = \xi,  \\
   \ud Y_t = -\begin{pmatrix}0\\ Y_t^1 + 2Q(v_t - \EE[v_t \vert \mc{F}_t^0]) \end{pmatrix} \ud t + Z_t \ud W_t + Z_t^0 \ud W_t^0, \quad & Y_T = 0.
\end{dcases}
\end{equation}
It immediately follows that $Y_t^1 \equiv Z_t^1 \equiv  Z_t^{0,1} \equiv 0$. Therefore, it suffices to solve the reduced forward-backward system for $(v_t, Y_t^2, Z_t^2, Z_t^{0,2})$.

Taking conditional expectations with respect to $\mc{F}_t^0$ of the backward equation in~\eqref{eq:CSFlocking_special} gives
\begin{equation}
    \EE[Y_t^2 \vert \mc{F}_t^0] = - \EE\bigg[\int_t^T Z_s^{0,2} \ud W_s^0\bigg\vert \mc{F}_t^0\bigg] = 0.
\end{equation}
Similarly, the forward equation yields 
\begin{equation}\label{eq:CSFlocking_mvt}
 \EE[v_t \vert \mc{F}_t^0] = \EE[v_0 \vert \mc{F}_t^0]-\int_0^t \frac{1}{2} R^{-1}\EE[Y_s^2 \vert \mc{F}_t^0] \ud s + D W_t^0 = \EE[v_0] + D W_t^0,
\end{equation}
where the second term vanishes due to $\EE[Y_s^2 \vert \mc{F}_s^0] = \EE[Y_s^2 \vert \mc{F}_t^0]$ = 0. We then propose the ansatz:
\begin{equation}
    Y_t^2 = \eta(t) (v_t - \EE[v_t \vert \mc{F}_t^0]), \quad \eta(T) = 0,
\end{equation}
where $\eta(t) \in \mathbb{R}^{d \times d}$. Applying Itô’s formula to the ansatz and using~\eqref{eq:CSFlocking_mvt} gives
\begin{equation}
    \ud Y_t^2 = \big[\dot\eta(t)(v_t -\EE[v_t \vert \mc{F}_t^0]) 
    - \frac{1}{2}\eta(t)R^{-1}\eta(t)(v_t - \EE[v_t \vert \mc{F}_t^0]) \big] \ud t + \eta(t) C \ud W_t.
\end{equation}
Comparing this with the backward equation in~\eqref{eq:CSFlocking_special}, we identify $Z_t^{0,2} \equiv 0$,
and deduce the following Riccati equation for $\eta(t)$:
\begin{equation}
        \dot\eta(t) - \frac{1}{2} \eta(t) R^{-1} \eta(t) + 2Q = 0, \quad \eta(T) = 0.
\end{equation}
From this, we further conclude that $Z_t^2 = \eta(t)C$, and the optimal control is given by
\begin{equation}
    \hat u_t = -\frac{1}{2}R^{-1} \eta(t)(v_t - \EE[v_t \vert \mc{F}_t^0]).
\end{equation}

The above ODE for $\eta(t)$ admits an analytical solution, which serves as a benchmark for the numerical experiments presented in the subsequent section when $\beta = 0$. In particular, when $R = 0.5I_d$ and $Q = 0.5I_d$, the solution is $\eta(t)=\frac{e^{2T}-e^{2t}}{e^{2t}+e^{2T}} I_d$,

and the forward SDE becomes
\begin{equation}\label{eq:analytical}
    \begin{cases}
    \ud x_t = v_t \ud t, 
    \\
    \ud v_t = -\frac{1}{2} R^{-1} \eta(t)(v_t - D W_t^0 -\mathbb E[v_0] )\ud t + C \ud W_t + D\ud W_t^0, \quad &(x_0, v_0) = \xi.
    \end{cases}
\end{equation}

\subsubsection{Numerical Results}

We implement the proposed algorithm and evaluate its numerical performance on the flocking model. Following the setup in \cite{hanhulong2022learning}, we set $d = q = 3$ and take $C = 0.1 I_3$ and $D = 0.3I_3$, where $I_3$ denotes $3\times 3$ identity matrix. The SDE is initialized with $x_0 \sim \mathcal{N}(0, I_3)$ and $x_1 \sim \mathcal{N}(1, I_3)$. To approximate $m_4,$ we sample $N_2 = 2^7$ common noise paths and $N_1 = 2^9$ idiosyncratic noise paths per common noise, adopting \textbf{Archi. 2} as described in Section~\ref{sec:numerics1}. 
The approximation of $m_4$ and $y_0$ is carried out using feedforward neural networks with two hidden layers of width 64, employing SiLU and Tanh activations, respectively. 
For $Z$ and $Z_0$, we adopt larger architectures consisting of four hidden layers of width 128 with Tanh activation. The fictitious play procedure is run for 20 stages. In the $k$-th fictitious play, \textbf{Step 2} (supervised learning for $m_4$) is trained for 5000 iterations with an initial $lr = 2.5 \times 10^{-4}$, decayed by a factor of 0.8 every 1000 iterations. \textbf{Step 3} (Deep BSDE training) then runs for 3000 iterations with an initial $lr = 7.5\times 10^{-5}$, reduced by a factor of 0.3 every 1000 iterations. Paths of $(W, W^0)$ are regenerated every 30 iterations.

\begin{figure}[!htbp]
    \centering
        \includegraphics[width=0.7\textwidth]{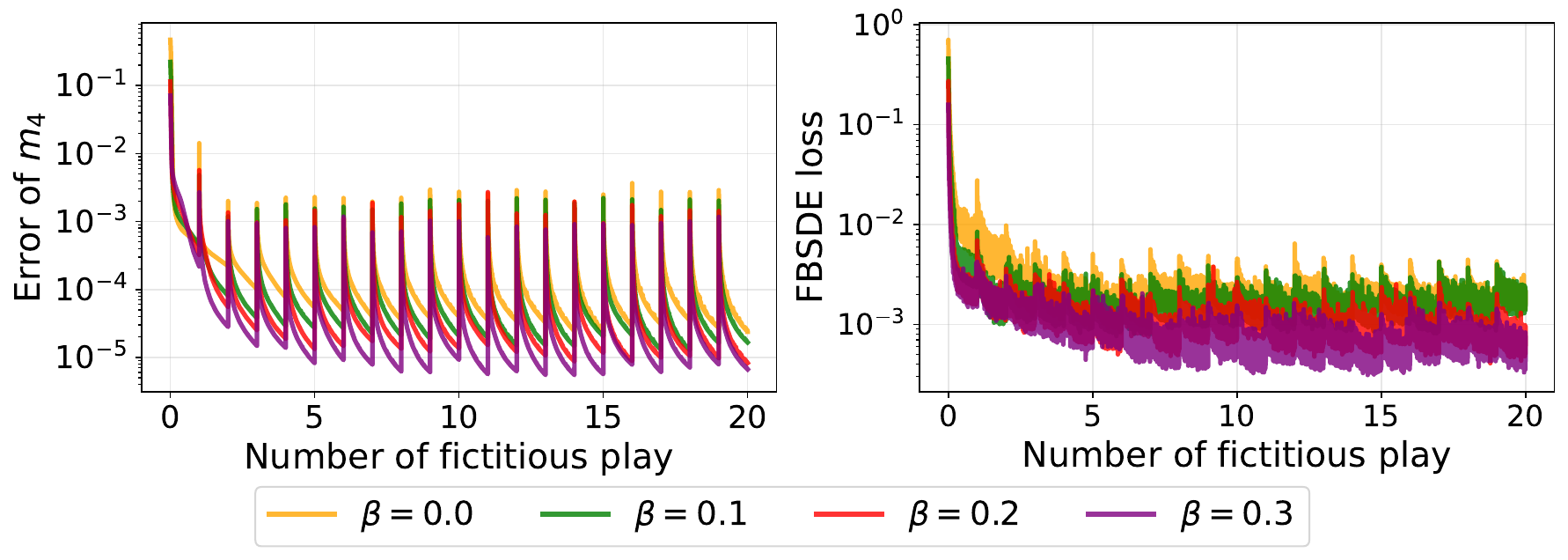}
    \caption{Training loss for Cucker-Smale model with different $\beta$. Left: the loss of supervised learning of $m_4$ in \textbf{Step 2}. Right: the loss of Deep BSDE in \textbf{Step 3}.}
    \label{fig: flock_training_loss}
\end{figure}

Figure~\ref{fig: flock_training_loss} shows the training loss curves. After several fictitious-play iterations, the mean square errors of $m_4$ and the FBSDE loss start below $10^{-2}$ at the beginning of each stage. The error of $m_4$ then decreases rapidly, while the FBSDE loss remains relatively stable within each stage. The behavior is consistent across different values of $\beta$, demonstrating the robustness of the training procedure.

Figure~\ref{fig: conditional-density-comparison} compares the conditional state distributions at time $T$ under two different common noise paths (top and bottom), each with multiple values of $\beta$. The numerical trend for different $\beta$ is consistent with that in \cite{hanhulong2022learning}: larger values of $\beta$ produce more dispersed densities, reflecting weaker misalignment in both position and velocity. The figure also highlights the dependence on the common noise: different realizations substantially alter both the mean and variance of the distribution.

Figure~\ref{fig: ana_num_comparison} presents multiple state trajectories for $\beta = 0$ and compares them with the analytical solution \eqref{eq:analytical}, driven either by different (top) or identical (bottom) common noise paths. Trajectories conditioned on the same common noise exhibit lower volatility, consistent with the stochastic model.

In both Figures~\ref{fig: conditional-density-comparison} and \ref{fig: ana_num_comparison}, the numerical solutions with $\beta = 0$ closely match the analytical results, both in trajectory evolution and in conditional densities. The conditional expectation $\mathbb{E}[v_T \vert \mathcal{F}_T^0]$ is also well-captured. This consistency validates the effectiveness of the learned network in approximating the $m_i$ functions and FBSDE solutions for the Cucker-Smale MFG system.

\begin{figure}[!htbp]
    \centering
        \includegraphics[width=0.7\textwidth]{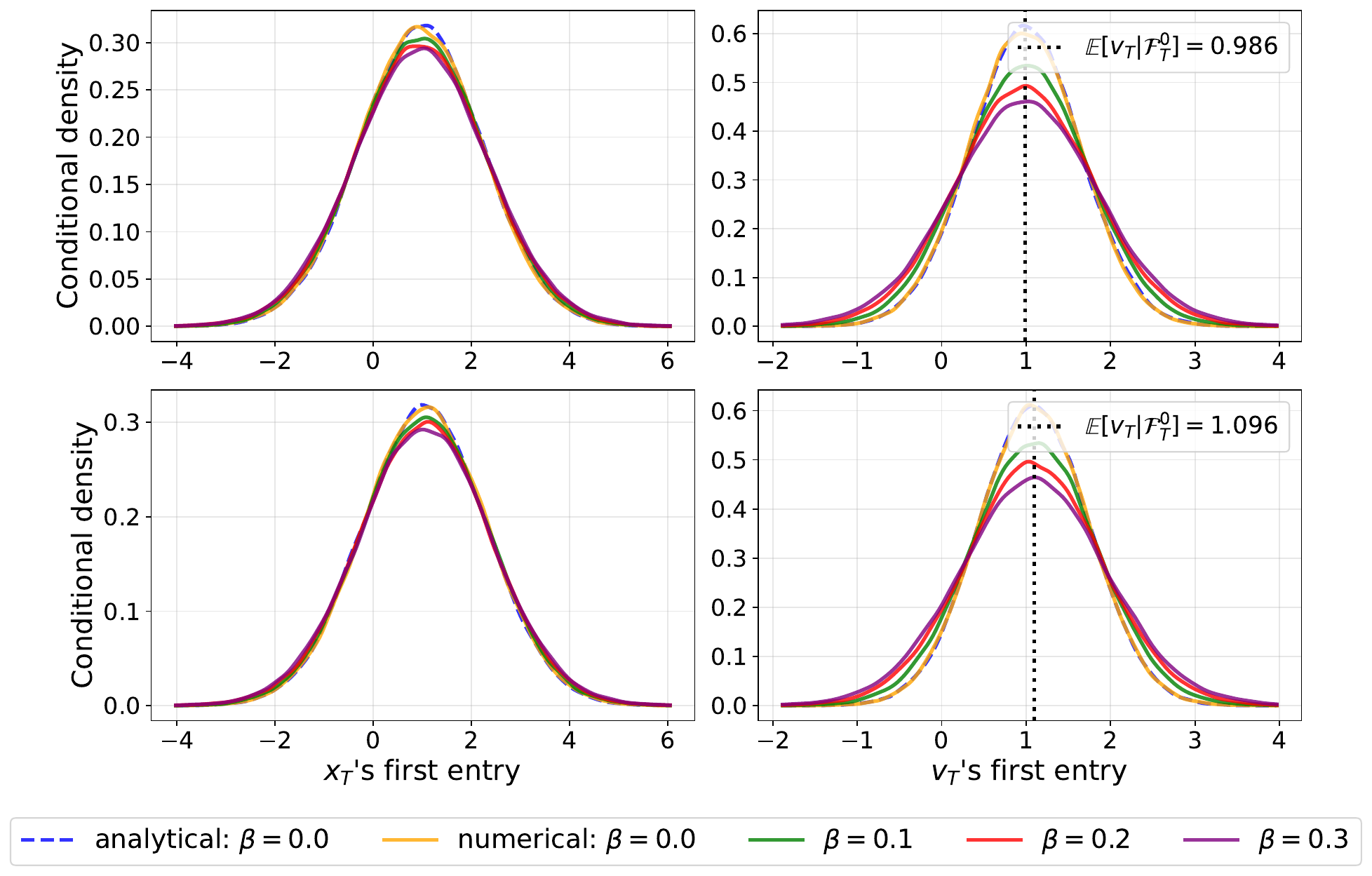}
    \caption{Conditional density of the Cucker-Smale MFG system's final states on two independent realizations of the common noise paths. Top: first noise realization; Bottom: second noise realization. Left: conditional density of position $x_T$; right: conditional density of velocity $v_T$. The dotted line in the velocity plots marks the conditional expectation derived from (\ref{eq:CSFlocking_mvt}).} 
    \label{fig: conditional-density-comparison}
\end{figure}

\begin{figure}[!htbp]
    \centering
        \includegraphics[width=0.7\textwidth]{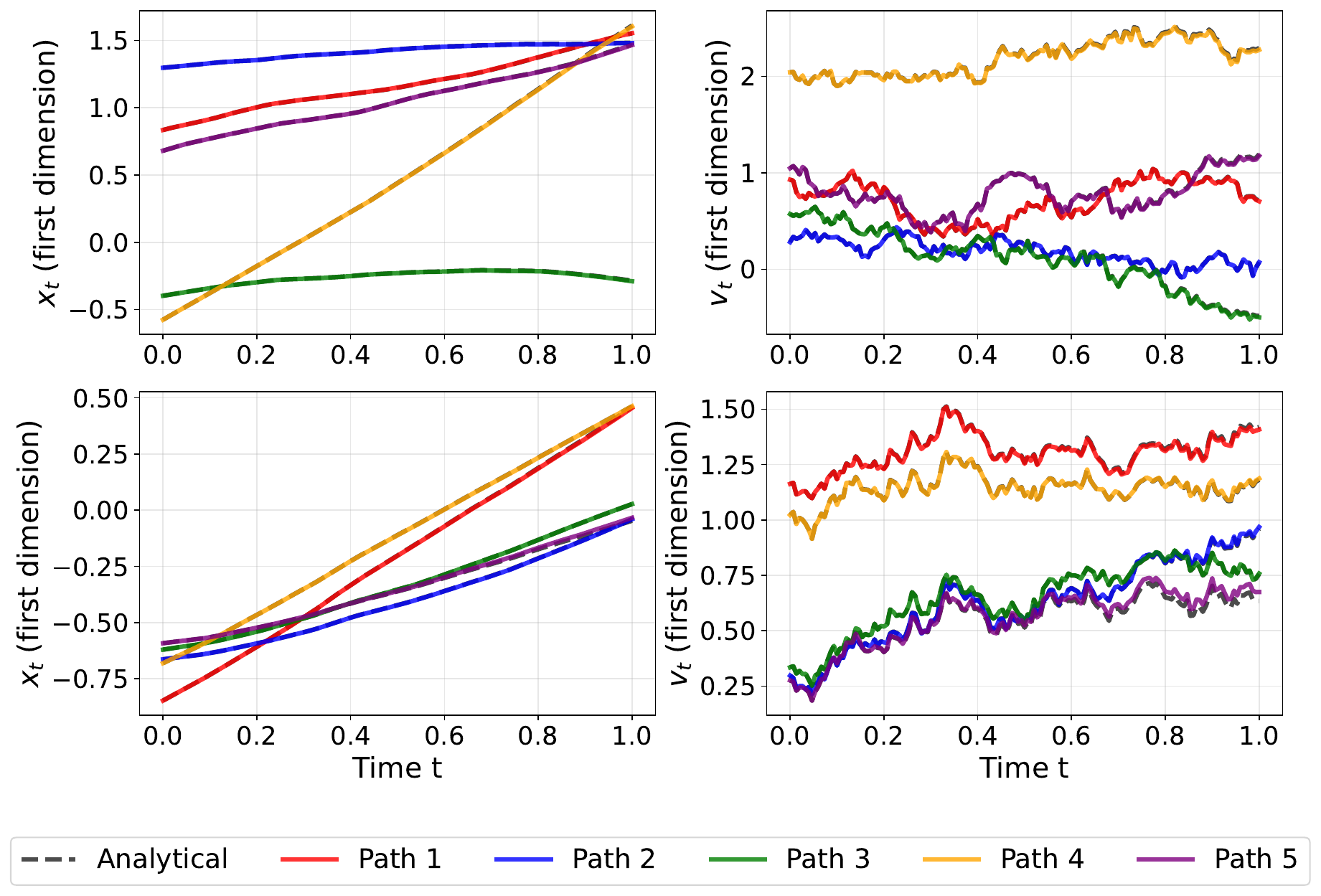}
    \caption{Comparison of analytical solution~\eqref{eq:analytical} and numerical solutions in the LQ case ($\beta = 0$). Top: unconditional trajectories with five independent realizations of both idiosyncratic and common noise. Bottom: conditional trajectories with five independent idiosyncratic noise realizations under a single realization of common noise.}
    \label{fig: ana_num_comparison}
\end{figure}

\bibliographystyle{plain}
\bibliography{refs}

@article{lasry2007mean,
  title={Mean field games},
  author={Lasry, Jean-Michel and Lions, Pierre-Louis},
  journal={Japanese journal of mathematics},
  volume={2},
  number={1},
  pages={229--260},
  year={2007},
  publisher={Springer}
}

@article{hu2019deep,
  title={Deep fictitious play for stochastic differential games},
  author={Hu, Ruimeng},
  journal={Communications in Mathematical Sciences},
  pages={325-353},
  volume={19},
  number={2},
  year={2021}
}

@article{angiuli2025deep,
  title={Deep reinforcement learning for infinite horizon mean field problems in continuous spaces},
  author={Angiuli, Andrea and Fouque, Jean-Pierre and Hu, Ruimeng and Raydan, Alan},
  journal={Journal of Machine Learning},
  pages={11-47},
  volume={4},
  number={1},
  year={2025}
}

@article{huang2006large,
  title={Large population stochastic dynamic games: closed-loop McKean--Vlasov systems and the Nash certainty equivalence principle},
  author={Huang, Minyi and Caines, Peter E and Malham{\'e}, Roland P},
  journal={Communications in Information \& Systems},
  volume={6},
  number={3},
  pages={221--252},
  year={2006},
  publisher={International Press of Boston}
}

@inproceedings{nourian2010synthesis,
  title={Synthesis of {C}ucker-{S}male type flocking via mean field stochastic control theory: {N}ash equilibria},
  author={Nourian, Mojtaba and Caines, Peter E and Malham{\'e}, Roland P},
  booktitle={2010 48th Annual Allerton Conference on Communication, Control, and Computing (Allerton)},
  pages={814--819},
  year={2010},
  organization={IEEE}
}

@article{carmona2013probabilistic,
  title={Probabilistic analysis of mean-field games},
  author={Carmona, Ren{\'e} and Delarue, Fran{\c{c}}ois},
  journal={SIAM Journal on Control and Optimization},
  volume={51},
  number={4},
  pages={2705--2734},
  year={2013},
  publisher={SIAM}
}

@book{cardaliaguet2019master,
  title={The master equation and the convergence problem in mean field games},
  author={Cardaliaguet, Pierre and Delarue, Fran{\c{c}}ois and Lasry, Jean-Michel and Lions, Pierre-Louis},
  year={2019},
  publisher={Princeton University Press}
}

@article{carmona2022convergence,
  title={Convergence analysis of machine learning algorithms for the numerical solution of mean field control and games: {II}—the finite horizon case},
  author={Carmona, Ren{\'e} and Lauri{\`e}re, Mathieu},
  journal={The Annals of Applied Probability},
  volume={32},
  number={6},
  pages={4065--4105},
  year={2022},
  publisher={Institute of Mathematical Statistics}
}

@article{cardaliaguet2022first,
  title={On first order mean field game systems with a common noise},
  author={Cardaliaguet, Pierre and Souganidis, Panagiotis E},
  journal={The Annals of Applied Probability},
  volume={32},
  number={3},
  pages={2289--2326},
  year={2022},
  publisher={Institute of Mathematical Statistics}
}

@article{dianetti2025strong,
  title={Strong solutions to submodular mean field games with common noise and related {M}c{K}ean--{V}lasov {FBSDEs}},
  author={Dianetti, Jodi},
  journal={The Annals of Applied Probability},
  volume={35},
  number={3},
  pages={1622--1667},
  year={2025},
  publisher={Institute of Mathematical Statistics}
}

@article{lacker2023closed,
  title={Closed-loop convergence for mean field games with common noise},
  author={Lacker, Daniel and Le Flem, Luc},
  journal={The Annals of Applied Probability},
  volume={33},
  number={4},
  pages={2681--2733},
  year={2023},
  publisher={Institute of Mathematical Statistics}
}

@incollection{gueant2010mean,
  title={Mean field games and applications},
  author={Gu{\'e}ant, Olivier and Lasry, Jean-Michel and Lions, Pierre-Louis},
  booktitle={Paris-Princeton lectures on mathematical finance 2010},
  pages={205--266},
  year={2010},
  publisher={Springer}
}

@article{graber2016linear,
  title={Linear quadratic mean field type control and mean field games with common noise, with application to production of an exhaustible resource},
  author={Graber, P Jameson},
  journal={Applied Mathematics \& Optimization},
  volume={74},
  number={3},
  pages={459--486},
  year={2016},
  publisher={Springer}
}

@article{hua2024linear,
  title={Linear-quadratic extended mean field games with common noises},
  author={Hua, Tianjiao and Luo, Peng},
  journal={Journal of Differential Equations},
  volume={411},
  pages={204--226},
  year={2024},
  publisher={Elsevier}
}

@article{huang2022mean,
  title={Mean field games with common noises and conditional distribution dependent FBSDEs},
  author={Huang, Ziyu and Tang, Shanjian},
  journal={Chinese Annals of Mathematics, Series B},
  volume={43},
  number={4},
  pages={523--548},
  year={2022},
  publisher={Springer}
}

@techreport{brown1949some,
  title={{S}ome {N}otes on {C}omputation of {G}ames {S}olutions},
  author={Brown, George W},
  year={1949},
  institution={RAND Corporation}
}

@article{brown1951iterative,
  title={{I}terative {S}olution of {G}ames by {F}ictitious {P}lay},
  author={Brown, George W},
  journal={Activity Analysis of Production and Allocation},
  volume={13},
  number={1},
  pages={374--376},
  year={1951},
  publisher={New York}
}

@book{lyons2002system,
  title={System control and rough paths},
  author={Lyons, Terry and Qian, Zhongmin},
  year={2002},
  publisher={Oxford University Press}
}

@book{LyonsTerryJ2007DEDb,
  title={Differential equations driven by rough paths},
  author={Lyons, Terry J and Caruana, Michael and L{\'e}vy, Thierry},
  year={2007},
  publisher={Springer}
}

@article{boedihardjo2014signature,
  title={The signature of a rough path: uniqueness},
  author={Boedihardjo, Horatio and Geng, Xi and Lyons, Terry and Yang, Danyu},
  journal={Advances in Mathematics},
  volume={293},
  pages={720--737},
  year={2016},
  publisher={Elsevier}
}

@inproceedings{NEURIPS2019_deepsig,
  title={Deep signature transforms},
  author={Bonnier, Patric and Kidger, Patrick and Arribas, Imanol Perez and Salvi, Cristopher and Lyons, Terry},
  booktitle={Advances in Neural Information Processing Systems 32 (NeurIPS)},
  year={2019}
}

@inproceedings{lauriere2022scalable,
  title={Scalable deep reinforcement learning algorithms for mean field games},
  author={Lauriere, Mathieu and Perrin, Sarah and Girgin, Sertan and Muller, Paul and Jain, Ayush and Cabannes, Theophile and Piliouras, Georgios and P{\'e}rolat, Julien and Elie, Romuald and Pietquin, Olivier},
  booktitle={International conference on machine learning},
  pages={12078--12095},
  year={2022},
  organization={PMLR}
}

@article{cardaliaguet2017learning,
  title={Learning in mean field games: the fictitious play},
  author={Cardaliaguet, Pierre and Hadikhanloo, Saeed},
  journal={ESAIM: Control, Optimisation and Calculus of Variations},
  volume={23},
  number={2},
  pages={569--591},
  year={2017},
  publisher={EDP Sciences}
}

@inproceedings{magnino2025solving,
  title={Solving Continuous Mean Field Games: Deep Reinforcement Learning for Non-Stationary Dynamics},
  author={Magnino, Lorenzo and Shao, Kai and Wu, Zida and Shen, Jiacheng and Lauri{\`e}re, Mathieu},
  booktitle={Advances in Neural Information Processing Systems (NeurIPS)},
  year={2025}
}

@article{han2017deep,
  title={{D}eep {L}earning-{B}ased {N}umerical {M}ethods for {H}igh-{D}imensional {P}arabolic {P}artial {D}ifferential {E}quations and {B}ackward {S}tochastic {D}ifferential {E}quations},
  author={E, Weinan and Han, Jiequn and Jentzen, Arnulf},
  journal={Communications in Mathematics and Statistics},
  volume={5},
  number={4},
  pages={349--380},
  year={2017},
  publisher={Springer}
}

@article{han2018solving,
  title={Solving high-dimensional partial differential equations using deep learning},
  author={Han, Jiequn and Jentzen, Arnulf and E, Weinan},
  journal={Proceedings of the National Academy of Sciences},
  volume={115},
  number={34},
  pages={8505--8510},
  year={2018},
  publisher={National Academy of Sciences}
}

@article{germain2022approximation,
  title={Approximation error analysis of some deep backward schemes for nonlinear PDEs},
  author={Germain, Maximilien and Pham, Huyen and Warin, Xavier},
  journal={SIAM Journal on Scientific Computing},
  volume={44},
  number={1},
  pages={A28--A56},
  year={2022},
  publisher={SIAM}
}

@article{cucker2007emergent,
  title={Emergent behavior in flocks},
  author={Cucker, Felipe and Smale, Steve},
  journal={IEEE Transactions on automatic control},
  volume={52},
  number={5},
  pages={852--862},
  year={2007},
  publisher={IEEE}
}

@article{hure2020deep,
  title={Deep backward schemes for high-dimensional nonlinear PDEs},
  author={Hur{\'e}, C{\^o}me and Pham, Huy{\^e}n and Warin, Xavier},
  journal={Mathematics of Computation},
  volume={89},
  number={324},
  pages={1547--1579},
  year={2020}
}

@article{liao2019learning,
  title={Learning stochastic differential equations using RNN with log signature features},
  author={Liao, Shujian and Lyons, Terry and Yang, Weixin and Ni, Hao},
  journal={arXiv preprint arXiv:1908.08286},
  year={2019}
}

@article{vu2025heterogenous,
  title={Heterogenous Macro-Finance Model: A Mean-field Game Approach},
  author={Vu, Hoang and Ichiba, Tomoyuki},
  journal={arXiv preprint arXiv:2502.10666},
  year={2025}
}

@article{lavigne2023decarbonization,
  title={Decarbonization of financial markets: a mean-field game approach},
  author={Lavigne, Pierre and Tankov, Peter},
  journal={arXiv preprint arXiv:2301.09163},
  year={2023}
}

@article{carmona2015mean,
  title={Mean field games and systemic risk},
  author={Carmona, Rene A and Fouque, Jean Pierre and Sun, Li Hsien},
  journal={Communications in Mathematical Sciences},
  volume={13},
  number={4},
  pages={911--933},
  year={2015},
  publisher={International Press of Boston, Inc.}
}

@article{escribe2024mean,
  title={A mean field game model for renewable investment under long-term uncertainty and risk aversion},
  author={Escribe, C{\'e}lia and Garnier, Josselin and Gobet, Emmanuel},
  journal={Dynamic Games and Applications},
  volume={14},
  number={5},
  pages={1093--1130},
  year={2024},
  publisher={Springer}
}

@article{burzoni2023mean,
  title={Mean field games with absorption and common noise with a model of bank run},
  author={Burzoni, Matteo and Campi, Luciano},
  journal={Stochastic Processes and their Applications},
  volume={164},
  pages={206--241},
  year={2023},
  publisher={Elsevier}
}

@article{gu2024global,
  title={Global solutions to master equations for continuous time heterogeneous agent macroeconomic models},
  author={Gu, Zhouzhou and Lauriere, Mathieu and Merkel, Sebastian and Payne, Jonathan},
  journal={arXiv preprint arXiv:2406.13726},
  year={2024}
}

@article{dumitrescu2024energy,
  title={Energy transition under scenario uncertainty: a mean-field game of stopping with common noise},
  author={Dumitrescu, Roxana and Leutscher, Marcos and Tankov, Peter},
  journal={Mathematics and Financial Economics},
  volume={18},
  number={2},
  pages={233--274},
  year={2024},
  publisher={Springer}
}

@article{firoozi2018mean,
  title={Mean field game systems with common noise and {M}arkovian latent processes},
  author={Firoozi, Dena and Caines, Peter E and Jaimungal, Sebastian},
  journal={arXiv preprint arXiv:1809.07865},
  year={2018}
}

@article{carmona2017mean,
  title={Mean field games of timing and models for bank runs},
  author={Carmona, Ren{\'e} and Delarue, Fran{\c{c}}ois and Lacker, Daniel},
  journal={Applied Mathematics \& Optimization},
  volume={76},
  number={1},
  pages={217--260},
  year={2017},
  publisher={Springer}
}

@article{fu2021mean,
  title={A mean field game of optimal portfolio liquidation},
  author={Fu, Guanxing and Graewe, Paulwin and Horst, Ulrich and Popier, Alexandre},
  journal={Mathematics of Operations Research},
  volume={46},
  number={4},
  pages={1250--1281},
  year={2021},
  publisher={INFORMS}
}

@inproceedings{wu2025population,
  title={Population-aware Online Mirror Descent for Mean-Field Games by Deep Reinforcement Learning},
  author={Wu, Zida and Lauri{\`e}re, Mathieu and Chua, Samuel Jia Cong and Geist, Matthieu and Pietquin, Olivier and Mehta, Ankur},
  booktitle={Proceedings of the 23rd International Conference on Autonomous Agents and Multiagent Systems},
  pages={2561--2563},
  year={2024}
}

@article{perrin2020fictitious,
  title={Fictitious play for mean field games: Continuous time analysis and applications},
  author={Perrin, Sarah and P{\'e}rolat, Julien and Lauri{\`e}re, Mathieu and Geist, Matthieu and Elie, Romuald and Pietquin, Olivier},
  journal={Advances in neural information processing systems},
  volume={33},
  pages={13199--13213},
  year={2020}
}

@incollection{achdou2018mean,
  title={Mean field games for modeling crowd motion},
  author={Achdou, Yves and Lasry, Jean-Michel},
  booktitle={Contributions to partial differential equations and applications},
  pages={17--42},
  year={2018},
  publisher={Springer}
}

@article{lacker2016general,
  title={A general characterization of the mean field limit for stochastic differential games},
  author={Lacker, Daniel},
  journal={Probability Theory and Related Fields},
  volume={165},
  number={3},
  pages={581--648},
  year={2016},
  publisher={Springer}
}

@inproceedings{gomes2023machine,
  title={Machine learning architectures for price formation models with common noise},
  author={Gomes, Diogo and Gutierrez, Julian and Lauri{\`e}re, Mathieu},
  booktitle={2023 62nd IEEE conference on decision and control (CDC)},
  pages={4345--4350},
  year={2023},
  organization={IEEE}
}

@article{bassou2024mean,
  title={Mean field game of mutual holding with common noise},
  author={Bassou, Leila and Djete, Mao Fabrice and Touzi, Nizar},
  journal={arXiv preprint arXiv:2403.16232},
  year={2024}
}

@article{alasseur2020extended,
  title={An extended mean field game for storage in smart grids},
  author={Alasseur, Cl{\'e}mence and Ben Taher, Imen and Matoussi, Anis},
  journal={Journal of Optimization Theory and Applications},
  volume={184},
  number={2},
  pages={644--670},
  year={2020},
  publisher={Springer}
}

@article{bensoussan2021linear,
  title={Linear-quadratic-Gaussian mean-field-game with partial observation and common noise},
  author={Bensoussan, Alain and Feng, Xinwei and Huang, Jianhui},
  journal={Mathematical control and related fields},
  volume={11},
  number={1},
  pages={23--46},
  year={2021},
  publisher={American Institute of Mathematical Sciences}
}

@article{bertucci2023monotone,
  title={Monotone solutions for mean field games master equations: continuous state space and common noise},
  author={Bertucci, Charles},
  journal={Communications in Partial Differential Equations},
  volume={48},
  number={10-12},
  pages={1245--1285},
  year={2023},
  publisher={Taylor \& Francis}
}

@article{belak2021continuous,
  title={Continuous-time mean field games with finite state space and common noise},
  author={Belak, Christoph and Hoffmann, Daniel and Seifried, Frank T},
  journal={Applied Mathematics \& Optimization},
  volume={84},
  number={3},
  pages={3173--3216},
  year={2021},
  publisher={Springer}
}

@article{kolokoltsov2019mean,
  title={On mean field games with common noise and {M}c{K}ean-{V}lasov {SPDE}s},
  author={Kolokoltsov, Vassili N and Troeva, Marianna},
  journal={Stochastic Analysis and Applications},
  volume={37},
  number={4},
  pages={522--549},
  year={2019},
  publisher={Taylor \& Francis}
}

@article{carmona2016mean,
  title={Mean field games with common noise},
  author={Carmona, Ren{\'e} and Delarue, Fran{\c{c}}ois and Lacker, Daniel},
  journal={Annals of probability: An official journal of the Institute of Mathematical Statistics},
  volume={44},
  number={6},
  pages={3740--3803},
  year={2016},
  publisher={Institute of Mathematical Statistics}
}

@article{hanhulong2022learning,
  title={Learning high-dimensional {M}c{K}ean-{V}lasov forward-backward stochastic differential equations with general distribution dependence},
  author={Han, Jiequn and Hu, Ruimeng and Long, Jihao},
  journal={SIAM Journal on Numerical Analysis},
  volume={62},
  number={1},
  pages={1--24},
  year={2024},
  publisher={SIAM}
}

@inproceedings{min2021signatured,
  title={Signatured Deep Fictitious Play for Mean Field Games with Common Noise},
  author={Min, Ming and Hu, Ruimeng},
  booktitle={International Conference on Machine Learning},
  pages={7736--7747},
  year={2021},
  organization={PMLR}
}

@book{zhang2017backwardbook,
  title={Backward stochastic differential equations},
  author={Zhang, Jianfeng},
  year={2017},
  publisher={Springer}
}

@article{lasry2008application,
  title={Application of mean field games to growth theory},
  author={Lasry, Jean-Michel and Lions, Pierre Louis and Gu{\'e}ant, Olivier},
  year={2008}
}

@book {CarmonaDelarue_book_II,
    AUTHOR = {Carmona, Ren\'{e} and Delarue, Fran\c{c}ois},
     TITLE = {Probabilistic theory of mean field games with applications.
              {II}},
    SERIES = {Probability Theory and Stochastic Modelling},
    VOLUME = {84},
      NOTE = {Mean field games with common noise and master equations},
 PUBLISHER = {Springer, Cham},
      YEAR = {2018},
     PAGES = {xxiv+697},
      ISBN = {978-3-319-56435-7; 978-3-319-56436-4},
   MRCLASS = {60-02 (35R60 49L20 60G55 60H10 60H30 91A13 91A15)},
  MRNUMBER = {3753660},
MRREVIEWER = {Vassili N. Kolokol\cprime tsov},
}

@article{han2022convergence,
  title={Convergence of deep fictitious play for stochastic differential games},
  author={Han, Jiequn and Hu, Ruimeng and Long, Jihao},
  journal={Frontiers of Mathematical Finance},
  volume={1},
  number={2},
  year={2022}
}

@inproceedings{kidger2021signatory,
  title={Signatory: differentiable computations of the signature and logsignature transforms, on both CPU and GPU},
  author={Kidger, Patrick and Lyons, Terry},
  booktitle={International Conference on Learning Representations (ICLR)},
  year={2021},
  url={https://arxiv.org/abs/2001.00706}
}

\end{document}